\newcommand{\svskip}{\vspace{3mm}}
\newcommand{\C}{{\Bbb C}}
\newcommand{\CC}{{\Bbb C}}
\newcommand{\R}{{\Bbb R}}
\newcommand{\Z}{{\Bbb Z}}
\newcommand{\Q}{{\Bbb Q}}
\newcommand{\U}{{\Bbb U}}
\newtheorem{thm}{Theorem}[section]
\newtheorem{lem}[thm]{Lemma}
\newtheorem{prop}[thm]{Proposition}
\newtheorem{cor}[thm]{Corollary}
\newtheorem{propdefn}[thm]{Proposition-Definition}
\newtheorem{remark}[thm]{{\sc Remark}}
\newtheorem{example}[thm]{{\sc Example}}
\begin{document}

\title{Multivariable Hodge theoretical invariants of germs 
of plane curves I} 

\author{Pierrette Cassou-Nogu\`es}
\address{Department of Mathematics\\
Universite Bordeaux 1}
\email{Pierrette.Cassou-nogues@math.u-bordeaux1.fr}

\author{Anatoly Libgober}
\address{Department of Mathematics \\
University of Illinois, Chicago, IL 60607}
\email{libgober@math.uic.edu}
\date{} 
\begin{abstract}
We describe methods for calculation of polytopes of quasiadjunction 
for plane curve singularities which are invariants 
giving a Hodge theoretical 
refinements of the zero sets of multivariable Alexander polynomials.
In particular we identify some hyperplanes on which all polynomials
in multivariable Bernstein ideal vanish.
\end{abstract}
\maketitle
\section{Introduction}

The purpose of this paper is to study Hodge theoretical 
invariants of local systems on the complements to 
germs of plane curve singularities. 
These invariants, called the faces of quasiadjunction,  
yield a refinement for the multivariable Alexander polynomial
of a link of isolated singularity 
or, more precisely, for the characteristic varieties associated 
with the homology of universal abelian covers of the complements
to a germ of plane curve. 
We develop algorithmic methods for calculation of these
Hodge theoretical invariants in terms of combinatorics of defining
equations of germs and to use them to study distributions
of polytopes for all germs of plane curves.

While a multivariable Alexander polynomial is a product 
of factors of the form $t_1^{m_1}...t_r^{m_r}-\omega$ where 
$\omega=e^{2 \pi \sqrt {-1} \alpha}$ 
is a root of unity (i.e. ${\alpha}, \alpha \in \bf Q$), 
the faces of quasiadjunction 
are subsets of the union of hyperplanes of the form 
$\sum m_ix_i=\alpha+k, \ k \in \Z$ and are related to the 
zero set of the Alexander polynomial via the exponential map. 
Faces of quasiadjunction 
for links of plane curve singularities were defined in 
\cite{charvar}, \cite{Hodge} using adjunction conditions 
for the abelian covers of germs. In these papers they 
were related to the cohomology of the finite order 
local systems of the complements.
Here we show that faces of quasiadjunction can be  
 defined entirely in terms of Mixed Hodge structure, described 
in \cite{nonvanloci}, 
on the cohomology of 
local systems (non necessarily of finite order)
on the complement to the germ: they
 parametrize the unitary local systems with the dimension 
of a Hodge component having a fixed value.
In particular the families of (unitary) local systems 
on the complement to a link with fixed Hodge data 
have a linear structure and form polytopes of various dimensions
in the universal covering of 
the space of unitary local systems. This description of the 
Mixed Hodge structure on the local systems in terms of log-resolution 
of the germ provides
first approach to calculations of faces of quasiadjunction.

The second approach discussed in this paper is based on 
the relation of faces of quasiadjunction with the spectrum of 
non reduced singularities (cf.\ref{localsystemspectrum}).
Below we show that ``most'' local systems with non vanishing 
Hodge numbers correspond to   
eigenvalues of the (semi-simple part of) the monodromy 
acting on graded vector space associated with limit Hodge 
filtration on the cohomology of the Milnor fiber of 
the non-reduced singularity of 
the form $f_1^{a_1} \cdot .... \cdot f_r^{a_r}$.  
Another technical result which makes calculations more 
effective shows that while the definition of faces of quasiadjunction
describes them in terms of  multiplicities 
of pull backs of defining equations and differentials 
along the exceptional curves, this data corresponding 
only to curves having at least three intersection points 
with other exceptional curves suffices. This can be viewed 
as an extension of what was known 
previously in uni-branched case and in the case of  
multivariable Alexander polynomials (cf. also
related result which concerns the log-canonical thresholds 
\cite{smith}).

We start with review in section \ref{sectiononideals}
of the definitions of the polytopes and ideals
of quasiadjunction, multiplier ideals and their basic properties. In section 
\ref{faces} we show the
connection with the Arnold-Steenbrink spectrum of non reduced 
singularities. As an application we show in section \ref{sectionbernstein}
that all multivariable Bernstein polynomials, i.e. the 
polynomials $b(s_1,...,s_r)$ for which there exist a differential 
operator $P$ such that $b(s_1,...,s_r)f_1^{s_1} \cdot ...\cdot f_r^{s_r}
=Pf_1^{s_1+1} \cdot....\cdot f_r^{s_r+1}$ are vanishing on 
hyperplanes containing codimension one faces of quasiadjunction.

The part II of this paper (\cite{PartII}) develops methods 
for calculations of the ideals and 
polytopes of quasiadjunction from defining equations of the 
germs. The main technical tool 
are the Newton trees introduced by the first named author.
The Newton trees are very close to Eisenbud and Neumann diagrams.
Moreover we introduce a partial resolution of singularity of the 
germ of a pair $(\C^2,D)$ which we call Newton space and which
is  a toroidal morphism $\tilde C^2 \rightarrow \C^2$  of the 
space $\C^2$ having only quotient singularities.  
The attractive feature of the Newton space is that it 
has canonical construction in terms of Newton tree
and does not involve choice (while classically used in 
this context resolutions do).
A byproduct of this construction is 
the fact that only the divisors in the 
resolution which intersect at least three other
divisors appear in the computation of the polytopes of quasiadjunction. 
A subsequent paper also describes application of these methods 
to a study of the polytopes of quasiadjunction and spectrum 
at infinity which serve as refinements of the studied 
earlier the Alexander polynomials at infinity.
Part II (cf. \cite{PartII}) also includes 
several explicit examples of calculation 
of polytopes and ideals of quasiadjunction.

First name author wants to thank Radcliffe Institute at Harvard 
University where the work on this project began. She was supported 
by MTM2007-67908-CO2-01 and MTM2007-67908-CO2-02.
The second named author 
wants to thank University of Bordeaux, where work on this 
project continued, for warm hospitality and support. He 
was supported MTM2007-67908-CO2-01 and by NSF grant.

\section{Ideals of quasiadjunction and multiplier ideals}\label{sectiononideals}

The ideals and polytopes of quasiadjunction, which are the main 
objects of this paper, are closely related 
to other invariants, i.e. the multiplier ideals,
 which recently attracted much attention. 
We shall briefly review 
their main properties relevant to this work. To describe them, 
let $X$ be a smooth complex affine variety of dimension $n$ and 
$\bf{a}$ an ideal in $\C[X]$.
A log-resolution of 
$\bf{a}$ is a proper birational map 
$\mu: Y \longrightarrow X$ whose exceptional locus is a divisor $E$ such that
\begin{enumerate}
\item $Y$ is non singular
\item  
$\bf{a} \mathcal{O}_Y=\mathcal{O}_Y (-F)$ with $F=\sum r_iE_i$ 
an effective divisor,
\item $F+E$ has simple normal crossing support.
\end{enumerate} 

Consider the relative 
canonical divisor $\kappa _{Y/X}=\kappa _Y -\mu ^* \kappa _X$, and
write $\kappa_{Y/X}=\sum b_iE_i$.

For every rational number $c\geq 0$ the multiplier ideal of the ideal 
$\bf{a}$ with exponent
$c$ is 
$$\Im (\bf{a}^c)=\{h\in \C[X] \vert \text{ord} _{E_i}(\mu ^*h)\geq [cr_i]-b_i, \forall i\}$$
where $[\ ]$ is the round down operation. The definition is independent of the chosen resolution.

More generally one can consider the jumping numbers of 
$\bf{a}=(f)$ i.e. 
one can prove that there exists  an increasing sequence  of rational numbers 
$$0 =\xi _0<\xi _1<\xi _2<\cdots $$
such that $\Im (\bf{a}^c)$ is constant on $\xi _i\leq c <\xi _{i+1}$, and 
$\Im (\bf{a}^{\xi _i})\neq \Im (\bf{a}^{\xi _{i+1}})$ .

The numbers $\xi _i$ are called the {\it jumping numbers} of $(f)$. If $f$ has isolated singularity at the origin.
It follows from \cite{loser}
 that if $\alpha \in (0,1]$, then $\alpha $ belongs to the 
Arnold-Steenbrink-Hodge spectrum of $f$  if and only if $\alpha$ is a jumping number. 

Moreover, Ein, Lazarfeld, Smith and Varolin (cf.\cite{ELSV})
have shown that if $\xi$ is a jumping number of $f$ in $(0,1]$, then $b_f (-\xi)= 0$ where $b_f(s)$ is the Bernstein polynomial of $f$.

In this paper we are interested in more general multiplier ideals, called the {\it mixed multiplier ideals}.
Let $\bf{a}_1, \cdots , \bf{a}_t$ be ideals in $\C[X]$ and $c_1,\cdots, c_t$ be positive numbers and consider a log resolution of $\bf{a}_1\cdots  
\bf{a}_t$. We write 
$\bf{a}_i\mathcal{O}_Y=\mathcal{O}_Y(-F_i)$ with $F_i=\sum r_{i,j}E_j$, then
$$\Im (\bf{a}_1^{c_1}\cdots \bf{a}_t^{c_t})=\{h\in \C[X] \vert \text{ord} _{E_j}(\mu ^*h)\geq [c_1r_{1,j}+\cdots+c_tr_{t,j}]-b_j, \forall j\}$$ 
Again the definition does not depend on the chosen resolution. We will restrict ourselves to the case where the ideals $\bf{a}_i=(f_i)$ are principal.

It turns out that the mixed multiplier ideals coincide with the 
ideals of quasiadjunction (cf. \cite{Hodge}).
 For these ideals there is a notion of ``jumping numbers'' 
which in this case are the {\it faces of quasiadjunction} referred to earlier. 
There is also a notion of log canonical threshold. 
In this paper we focus on the case where $n=2$. 

Now let us review the definition of ideals and polytopes of quasiadjunction.
Let $B$ be a small ball about the origin in $\C^2$ and let $\mathcal{C}$ be a germ of a plane curve having at $0$ a singularity with $r$ branches. 
Let $f_1(x,y)\cdots f_r(x,y)=0$ be a local equation of this curve 
(each $f_i$ is assumed to be irreducible). 
An abelian cover of type $(m_1,\cdots,m_r)$ of  
$\partial B$ is the link of complete intersection surface singularity: 
\begin{equation}\label{zandf}
 V_{m_1,\cdots,m_r}: z_1^{m_1}=f_1(x,y),\cdots, z_r^{m_r}=f_r(x,y)
\end{equation}
The covering map is given by $p:(z_1,\cdots, z_r,x,y)\to (x,y)$.

An {\it ideal of quasiadjunction} of type $(j_1,\cdots, j_r\vert m_1,\cdots, m_r)$ is the ideal in the local ring of the singularity of $\mathcal{C}$ consisting of germs $\phi$ such that the $2$-form:
\begin{equation}\label{phiform}
\omega _{\phi}=\frac {\phi z_1^{j_1}\cdots z_r^{j_r}dx \wedge dy}{z_1^{m_1-1}\cdots 
z_r^{m_r-1}}
\end{equation}
extends to a holomorphic form on a resolution of the singularity of $V_{m_1,\cdots,m_r}$. 

An {\it ideal of log-quasiadjunction} (resp. {\it ideal of weight one}
\par \noindent {\it log-quasiadjunction}) 
of type $(j_1,\cdots, j_r\vert m_1,\cdots, m_r)$
is the ideal in the same local ring consisting of 
germs $\phi$ such that the $2$-form $\omega _{\phi}$ extends 
to a log-form (resp. weight one log-form)
 on a resolution of the same singularity. 
We shall denote the ideal of quasiadjunction, 
(resp. log-quasiadjunction, resp. weight one 
log-quasiadjunction 
\footnote{recall that a form $\omega$ on a complement 
to a divisor $\cup E_i$ with normal crossings has weight $w$ 
if in a local coordinates near a point where $E_i$ is given by 
$z_i=0$ one has $\omega=f \cdot {dz_{i_1} \over z_{i_1}} \wedge...
\wedge {dz_{i_w} \over z_{i_w}} \wedge dz_{i_{w+1}} ...$}) corresponding
 to $(j_1,\cdots, j_r\vert m_1,\cdots, m_r)$ 
as $\mathcal{A}(j_1,\cdots, j_r\vert m_1,\cdots, m_r)$  
\newline (resp. $\mathcal{A}''(j_1,\cdots, j_r\vert m_1,\cdots, m_r)$, 
%\par \noindent 
resp. $\mathcal{A}'(j_1,\cdots, j_r\vert m_1,\cdots,m_r)$). We have
 $$\mathcal{A}(j_1,\cdots, j_r\vert m_1,\cdots, m_r)\subseteq $$
 $$\mathcal{A}'(j_1,\cdots, j_r\vert m_1,\cdots, m_r)\subseteq \mathcal{A}''(j_1,\cdots, j_r\vert m_1,\cdots,m_r)$$

\begin{prop}\label{finiteness}
 For any germ with $f_1 \cdot ... \cdot f_r=0$ 
there are finitely many ideals of quasiadjunction 
(and log-quasiadjunction)

\end{prop}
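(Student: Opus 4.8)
The plan is to reduce the finiteness of the family of ideals of quasiadjunction to the finiteness of the jumping numbers (equivalently, the faces of quasiadjunction) for a fixed log-resolution, together with a boundedness argument that confines the relevant tuples $(j_1,\dots,j_r\mid m_1,\dots,m_r)$ to a region where only finitely many distinct ideals can occur. First I would fix a log-resolution $\mu\colon Y\to B$ of the product $f_1\cdots f_r$ with exceptional components $E_k$, and record for each $k$ the multiplicities $r_{i,k}=\mathrm{ord}_{E_k}(\mu^*f_i)$ and $b_k=\mathrm{ord}_{E_k}(\kappa_{Y/B})$. Translating the extension condition on $\omega_\phi$ in \eqref{phiform} through this resolution, $\phi$ lies in $\mathcal{A}(j_1,\dots,j_r\mid m_1,\dots,m_r)$ if and only if for every $k$,
$$
\mathrm{ord}_{E_k}(\mu^*\phi)\;\geq\;\sum_{i=1}^r r_{i,k}\Bigl(1-\frac{j_i+1}{m_i}\Bigr)-b_k,
$$
so the ideal depends on the tuple only through the real numbers $c_i=1-\tfrac{j_i+1}{m_i}\in[0,1)$, and in fact only through the round-downs $\bigl[\sum_i c_i r_{i,k}\bigr]$ over the finitely many $k$. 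This identifies $\mathcal{A}(j_1,\dots,j_r\mid m_1,\dots,m_r)$ with a mixed multiplier ideal $\Im(\bf{a}_1^{c_1}\cdots\bf{a}_r^{c_r})$ exactly as in the discussion preceding the proposition.

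Next I would invoke the behavior of mixed multiplier ideals as the exponent vector $(c_1,\dots,c_r)$ varies over the unit cube $[0,1)^r$: the function $(c_1,\dots,c_r)\mapsto \Im(\bf{a}_1^{c_1}\cdots\bf{a}_r^{c_r})$ is a decreasing, upper-semicontinuous step function whose jumps lie on the finitely many rational hyperplanes $\sum_i r_{i,k}c_i\in\Z$ coming from the chosen resolution. These hyperplanes partition $[0,1)^r$ into finitely many polytopes (the faces of quasiadjunction), and the ideal is constant on each such polytope because on each piece every round-down $\bigl[\sum_i r_{i,k}c_i\bigr]$ is constant. Since each tuple $(j_1,\dots,j_r\mid m_1,\dots,m_r)$ produces a point $(c_1,\dots,c_r)$ in this cube, and there are only finitely many polytopes in the partition, there are only finitely many distinct ideals $\mathcal{A}(j_1,\dots,j_r\mid m_1,\dots,m_r)$. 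The same argument applies verbatim to $\mathcal{A}'$ and $\mathcal{A}''$: the extension-to-(weight one)-log-form conditions replace each inequality above by the analogous one with $b_k$ shifted by $0$ or by the number of log poles allowed, so again only the round-downs $\bigl[\sum_i r_{i,k}c_i\bigr]$ enter, and the same finite polytopal decomposition controls all three families simultaneously.

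The main obstacle, and the one step deserving genuine care rather than citation, is the passage from the geometric extension condition on $\omega_\phi$ on a resolution of the cyclic-type cover $V_{m_1,\dots,m_r}$ in \eqref{zandf} to the numerical inequality on $\mu^*\phi$ over a log-resolution of $(\C^2, f_1\cdots f_r)$ downstairs; this requires tracking how the ramification of $p$ along each branch contributes to $\kappa$ and to the order of vanishing of the pulled-back form, i.e.\ the $\tfrac{j_i+1}{m_i}$ correction, and checking that the condition is indeed insensitive to which resolution is used. Once this dictionary is in place — and it is exactly the content of the identification of ideals of quasiadjunction with mixed multiplier ideals recalled from \cite{Hodge} — the finiteness is immediate from the finiteness of the hyperplane arrangement attached to a single resolution. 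I would therefore present the argument in the order: (1) fix a resolution and write down the numerical criterion; (2) observe the dependence only on the round-downs $\bigl[\sum_i r_{i,k}c_i\bigr]$; (3) conclude from the finite polytopal decomposition of $[0,1)^r$, noting that the argument is uniform across $\mathcal{A}$, $\mathcal{A}'$, $\mathcal{A}''$.
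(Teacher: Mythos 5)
Your argument is correct and is essentially the paper's own: both fix a log-resolution, observe that the ideal of quasiadjunction is determined by the bounded integer vector $\bigl(\bigl[\sum_i r_{i,k}c_i\bigr]\bigr)_k$ of required vanishing orders along the exceptional components, and conclude finiteness from the boundedness. The only cosmetic difference is that the paper counts the admissible (``coherent'') functions $\Theta$ directly via $\Theta(k)\le N_k$, whereas you count the regions of the finite hyperplane arrangement $\sum_i r_{i,k}c_i\in\Z$ in $[0,1)^r$; these are the same enumeration.
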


\begin{proof} Let $E_1,...,E_k,... (k \in K)$ be the collection of exceptional 
components of a resolution $\pi$ of this germ. For each map $\Theta: K 
\rightarrow {\bf N}_+$ one can define the ideal 
${\mathcal A}_{\Theta}=\{\phi \in {\mathcal O}_{0,{\bf C}^2} \vert 
{\rm mult}_{E_k}\pi^*(\phi) \ge \Theta(k) \}$.
We shall call $\Theta$ coherent (resp. log-coherent) if there exist 
$(x_1,...,x_r) \ 0 \le x_i <1 $ such that 
$\Theta(k)=\sum a_{k,i}x_i-c_k$ (resp. $\Theta(k)=\sum a_{k,i}x_i-c_k-1$);
other notations are as in \cite{Hodge})
for all $k \in K$.
For each $k$ there is an integer $N_k$ such that for all $k$ and 
and coherent (or log-coherent)
$\Theta$ one has $\Theta(k) \le N_k$. The ideals of quasiadjunction 
are the ideals ${\mathcal A}_{\Theta}$ corresponding to coherent $\Theta$.
There are at most  $\Pi_k N_k$ coherent $\Theta$ and hence the claim 
follows. This proposition reduces calculation of the ideals 
of quasiadjunction to calculation of the ideals corresponding to valuations
and identifying those among them which correspond to 
coherent $\Theta$.

\end{proof}

 It is proved in \cite{Hodge}, Remark 2.6 
that  $\mathcal{A}(j_1,\cdots, j_r\vert m_1,\cdots, m_r)$ is equal to 
$\Im (\bf{a}_1^{c_1}\cdots \bf{a}_r^{c_r})$, where $\bf{a}_i=(f_i)$ and $c_i=1-\frac{j_i+1}{m_i}$ for all $i$.

Let 
$$\mathcal{U}=\{(x_1,\cdots, x_r)\in \R^r, 0\leq x_i<1\}$$
be the unit cube with coordinates corresponding to $f_1,\cdots, f_r$.

\begin{prop}

Let $\mathcal A$ be an ideal of quasiadjunction. There is a
unique polytope ${\mathcal P}({\mathcal A})$ 
open subset in $\mathcal{U}$ such that:
For $(m_1,\cdots, m_r)\in\Z^r$ and 
$(j_1,\cdots, j_r)\in \Z^r$ with $0\leq j_i<m_i, 1\leq i\leq r$
$${\mathcal A} \subseteq {\mathcal A} 
(j_1,...,j_r \vert m_1,...,m_r)\Leftrightarrow (\frac{j_1+1}{m_1},\cdots, 
\frac{j_r+1}{m_r})\in {\mathcal P}({\mathcal A})$$
%$({{j_1+1} \over {m_1}},...,{{j_r+1} \over {m_r}})$ belongs
%to ${\mathcal P}({\mathcal A})$ if and only one has inclusion
%${\mathcal A} \subseteq {\mathcal A} (j_1,...,j_r \vert m_1,...,m_r)$.
 \end{prop}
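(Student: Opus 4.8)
The plan is to exhibit $\mathcal P(\mathcal A)$ explicitly as $\mathcal U$ intersected with finitely many open half-spaces, and then to read off both the equivalence and the uniqueness. Fix a log resolution $\mu\colon Y\to B$ of $f_1\cdots f_r$ and, following the recollection of mixed multiplier ideals above, write $\mathbf a_i\mathcal O_Y=\mathcal O_Y(-\sum_{j}r_{i,j}E_j)$ and $\kappa_{Y/B}=\sum_{j}b_jE_j$, where $j$ runs over a finite index set $K$ and all $r_{i,j},b_j$ are nonnegative integers. For an admissible pair $(\mathbf j,\mathbf m)$ put $x_i=\tfrac{j_i+1}{m_i}$, so that the corresponding exponents are $c_i=1-x_i$. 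For the fixed ideal $\mathcal A$ set $v_j(\mathcal A):=\min\{\mathrm{ord}_{E_j}(\mu^*h)\colon h\in\mathcal A\}$; since $\mathcal A\neq 0$ this is a nonnegative integer and the minimum is attained (e.g.\ on a finite generating set of $\mathcal A$).

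Next I would translate membership in $\mathcal A(\mathbf j\,|\,\mathbf m)$ into linear inequalities. By its identification with the mixed multiplier ideal $\Im(\mathbf a_1^{c_1}\cdots\mathbf a_r^{c_r})$, a germ $h$ belongs to $\mathcal A(\mathbf j\,|\,\mathbf m)$ if and only if for every $j\in K$
\[
\mathrm{ord}_{E_j}(\mu^*h)\ \ge\ \Bigl[\textstyle\sum_i(1-x_i)r_{i,j}\Bigr]-b_j\ =\ \sum_i r_{i,j}-b_j-\Bigl\lceil\textstyle\sum_i x_i r_{i,j}\Bigr\rceil ,
\]
the last equality using $\sum_i r_{i,j}\in\Z$ to turn the round-down into a round-up. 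As the other terms are integers, the elementary equivalence $\lceil t\rceil\ge N\Leftrightarrow t>N-1$ (for $N\in\Z$) rewrites this as the strict inequality $\sum_i r_{i,j}\,x_i>\sum_i r_{i,j}-b_j-\mathrm{ord}_{E_j}(\mu^*h)-1$. For a fixed $j$ its right-hand side is largest when $\mathrm{ord}_{E_j}(\mu^*h)$ is smallest, i.e.\ equal to $v_j(\mathcal A)$, and that value is attained; hence requiring the inequality for all $h\in\mathcal A$ amounts to requiring $\sum_i r_{i,j}\,x_i>\sum_i r_{i,j}-b_j-v_j(\mathcal A)-1$. Therefore $\mathcal A\subseteq\mathcal A(\mathbf j\,|\,\mathbf m)$ holds precisely when $x=(x_1,\dots,x_r)$ lies in
\[
\mathcal P(\mathcal A)\ :=\ \mathcal U\ \cap\ \bigcap_{j\in K}\Bigl\{x\in\R^r\colon\ \textstyle\sum_i r_{i,j}\,x_i>\sum_i r_{i,j}-b_j-v_j(\mathcal A)-1\Bigr\},
\]
which is a bounded convex set cut out of $\mathcal U$ by finitely many strict linear inequalities — an open (in $\mathcal U$) polytope contained in $\mathcal U$; since it is characterized by a property referring only to the ideals $\mathcal A(\mathbf j\,|\,\mathbf m)$, it is independent of the choice of $\mu$.

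For uniqueness I would use density: as $(\mathbf j,\mathbf m)$ runs over admissible data the points $\bigl(\tfrac{j_1+1}{m_1},\dots,\tfrac{j_r+1}{m_r}\bigr)$ form a dense subset $D$ of $\mathcal U$ (each coordinate sweeps out $\Q\cap(0,1]$). If $\mathcal P'$ is a second open-in-$\mathcal U$ convex polytope with the stated property, then $\mathcal P'\cap D=\mathcal P(\mathcal A)\cap D$, so $\mathcal P'$ and $\mathcal P(\mathcal A)$ have the same closure in $\mathcal U$; since a convex set that is open in $\mathcal U$ equals the $\mathcal U$-interior of its closure, $\mathcal P'=\mathcal P(\mathcal A)$.

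The only step that needs genuine care is the round-down/round-up bookkeeping in the multiplier ideal: it is responsible for the inequalities defining $\mathcal P(\mathcal A)$ being strict (hence for $\mathcal P(\mathcal A)$ being open), and it must be combined correctly with the reduction from ``all $h\in\mathcal A$'' to the single divisorial value $v_j(\mathcal A)$ — legitimate precisely because that minimum is attained. A short separate remark disposes of the faces $x_i=1$ of the cube, where $c_i=0$ so that $\mathcal A(\mathbf j\,|\,\mathbf m)$ is computed from the germ with the $i$-th branch deleted; these lower-dimensional strata are handled by the same argument applied to that sub-germ.
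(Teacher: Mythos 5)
Your proposal is correct and follows the same route the paper has in mind: the paper does not write out a formal proof of this proposition, but right after it gives the defining inequality (\ref{expliciteineq}) $\sum_i a_{i,k}x_i>\sum_i a_{i,k}-e_k(\phi)-c_k-1$, which in the paper's notation ($a_{i,k}=r_{i,k}$, $c_k=b_k$, $e_k(\phi)=\mathrm{ord}_{E_k}(\mu^*\phi)$) is exactly what you derive from the mixed multiplier-ideal description after the round-down/round-up bookkeeping. Your additional observations — that the constraint over all $\phi\in\mathcal A$ reduces to the single divisorial minimum $v_j(\mathcal A)$ (attained on a finite generating set), and that uniqueness follows from density of the rational points $\bigl(\tfrac{j_1+1}{m_1},\dots,\tfrac{j_r+1}{m_r}\bigr)$ in $\mathcal U$ together with convexity and openness — fill in details the paper leaves implicit, and the parenthetical remark about $c_i=0$ (the $x_i=1$ strata) addresses a genuine edge case in the statement. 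No gaps.
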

A {\it face of quasiadjunction} is a face of the boundary of the 
polytope ${\mathcal P}({\mathcal A})$. 
It follows that it can be characterized as follows. Let $E_i$ be the 
exceptional curves of an embedded resolution $\pi: \tilde \C^2 \rightarrow 
\C^2$ of 
$f_1 \cdot...\cdot f_r=0$. Let $a_{i,k}=mult_{E_k}\pi^*(f_i)$ be the 
multiplicity of pullback of $f_i$ to $\tilde \C^2$, $c_k=
mult_{E_k}\pi^*(dx \wedge dy)$ and for a germ $\phi \in {\mathcal O}_{O}$,
$e_k(\phi)=mult_{E_k}\pi^*(\phi)$. 
Then the face of quasiadjunction 
containing $\wp=({{j_1+1} \over {m_1}},...,{{j_r+1} \over {m_r}}) 
\in {\mathcal U}$ is the face $\alpha$ of the 
boundary of the set of points satisfying:

\begin{equation}\label{expliciteineq}
    \sum_i a_{i,k}x_i >\sum_i a_{i,k}-e_k(\phi)-c_k-1
\end{equation}
for all $\phi$ in the ideal of quasiadjunction ${\mathcal A}(j_1,..,j_r 
\vert m_1,...,m_r)$ (and such that  $\wp \in \alpha$).
In particular for $(j_1',...,j_r' \vert m_1',...,m_r')$
for which the corresponding point satisfies (\ref{expliciteineq}) 
the form $\omega_{\phi}$ extends over all $E_i$. However for 
$(j_1',...,j_r' \vert m_1',...,m_r')$ on the face itself there exist 
$\phi$ in the ideal of quasiadjunction for which $\omega_{\phi}$
has pole on one of the exceptional curves.

An intrinsic interpretation of the cube $\mathcal U$ is as follows
(cf. \cite{nonvanloci}).
Consider the map $exp: {\mathcal U} \rightarrow {\C^*}^r$ where 
the target is identified with the group of characters of the 
fundamental group $\pi_1(B-\mathcal C)$. Its image consists 
of the unitary characters. The characters of the Galois group 
of the covering map $p: V_{m_1,..,m_r} \rightarrow \C^2$ form 
a finite subgroup in $exp({\mathcal U}) \subset 
Char \pi_1(B-\mathcal C)$. A pull back of the multivalued form 
$\omega_{\phi}$ given by (\ref{phiform}) is single-valued on 
$V_{m_1,...,m_r}-p^{-1}(\mathcal C)$ and is an eigenform 
for the action of the Galois group of $(V_{m_1,...,m_r}-p^{-1}(\mathcal C))/
(B-\mathcal C)$. If loop $\gamma_i$ is
defined by the condition that it has linking number 
with $f_i=0$ (resp. $f_j=0$) in $B$ equal to one (resp. zero) then 
this character is given by:

\begin{equation}\label{chargammai}
   \chi(\gamma_i)=exp (2 \pi \sqrt{-1} {{j_i+1} \over {m_i}})
\end{equation}

The condition that $\phi$ belongs to the ideal  of quasiadjunction 
is equivalent to the condition that $\omega_{\phi}$ represents the
zero cohomology class in $H^1(V_{m_1,..,m_r}-{\mathcal C})$.
The condition that $\phi$ belongs to the ideal log-quasiadjunction 
(resp. weight 2 log-quasiadjunction)
is equivalent to the condition that $\omega_{\phi}$ 
has logarithmic singularities along the exceptional divisor 
of the resolution of singularities of $V_{m_1,...,m_r}$ induced
by the resolution of $f_1 \cdot ...\cdot f_r=0$ (resp. is a weight 2 
logarithmic form).

\section{Faces of quasiadjunction and spectrum}\label{faces}

 In this section we shall describe a relationship between the faces 
of quasiadjunction and the spectrum. 
For this we need to reinterpret the elements of faces of quasiadjunction 
in terms of the cohomology of local systems on $B-{\mathcal C}$.
Recall first the definitions from 
\cite{Hodge} and \cite{nonvanloci}. 
One has $H_1(S^3-L,\Z)=\Z^r$ with generators given by the 
classes of the loops $\gamma_i$ described in just above (\ref{chargammai}). 
In particular 
the local systems on $S^3-L$ are parametrized by 
a torus of characters ${\C^*}^r$. For a fixed rank one local system
$\chi: H^1(S^3-L,\Z) \rightarrow \C^*$, 
the cohomology groups $H^1(S^3-L,\chi)$ support a mixed Hodge 
structure (cf. \cite{nonvanloci}) and we need a description 
of the polytopes of quasiadjunction in terms of this mixed Hodge structure.

\subsection{Mixed Hodge structure on the cohomology 
of a local system on a complement to a link}

Let us describe the mixed Hodge structure on $H^i(S^3-L,\chi)$, 
which is a slight modification of  construction in \cite{nonvanloci}
and method for calculating  the Hodge numbers of 
this mixed Hodge   
structure in this particular case of local systems on the 
complement to a germ of plane curve. 
First we shall select a resolution of singularity of a germ $D$
given by $f_1 \cdot ...\cdot f_r=0$.
Denote the collection of exceptional curves $E_i$ 
(as in  the proof of Prop. \ref{finiteness}).
We shall identify the 3-sphere $S^3$ about the origin 
with the boundary $\partial T(\cup E_i)$ of a regular neighborhood 
$T(\cup E_i)$ of the exceptional set $\cup E_i$. 
If $D^*$ is the proper preimage of $D$ for selected resolution
then $\partial T(\cup E_i) \cap D^*=S^3 \cap D=L$.
In particular one obtains the identification:
\begin{equation}
    S^3-L=(\bigcup_i \partial T(E_i))-D^*
\end{equation}

 The set  $\cup E_i \cup D^*$ has the 
canonical stratification with one dimensional (over $\C$)
strata $E_i^{\circ}$ and zero dimensional strata $E_i \cap E_j$, 
or $E_i \cap D^*$. This induces the decomposition of 
$(\bigcup_i \partial T(E_i))-D^*$ into a union of open 
3-dimensional (over $\R$) 
manifolds each being a circle bundle over $E_i^{\circ}$
which we denote $\partial T(E_i^{\circ})$. 
The intersection of any two such manifolds is a product 
of two punctured disks which we shall denote $\partial_{i,j}$
(it is empty unless $E_i \cap E_j \ne \emptyset)$.
One has the embeddings 
$\phi_{i,j}: \partial_{i,j} \rightarrow \partial T(\cup_k E_k)-D^*$
 each being either of the compositions
 $\partial_{i,j} \rightarrow \partial T(E_i^{\circ})
\rightarrow T(\cup_k E_k)-D^*$ or 
$\partial_{i,j} \rightarrow \partial T(E_j^{\circ})
\rightarrow T(\cup_k E_k)-D^*$. Given a character $\chi \in 
{\rm Char}(\pi_1(S^3-L))$
we denote by $\phi_{i,j}^*(\chi)$ the pull back of $\chi$ to $\partial_{i,j}$.

We shall start by describing the mixed Hodge structure on the cohomology 
of local systems 
on each $\partial T(E_i^{\circ})$ and then show how they yield the 
Hodge numbers of the MHS on the union. 
We can deal only with the case of non trivial
local systems $\chi \in {\rm Char} \pi_1(\partial \cup_iT(E_i)-D^*)
={\rm Char} \pi_1(S^3-L)$  since the case $\chi=1$ dealt with in \cite{DH}.

Let 
\begin{equation}\label{circlefibration}
 \theta: \partial T(E_i^{\circ}) \rightarrow
E_i^{\circ}
\end{equation}
be the canonical circle fibration and let $\gamma_{E_i}$ be the
class of the fiber of the latter in $H_1(\partial T(E_i^{\circ}))$. 

Notice that $H^i(\partial T(E_i^{\circ}),\chi)=0$ for any 
$i$ unless $\chi(\gamma_{E_i})=1$. This follows from 
Leray spectral sequence:
$$E_2^{p,q}=H^p(E_i^{\circ},R^q\theta_*(\chi)) \Rightarrow 
H^{p+q}(\partial T(E_i^{\circ}),\chi)$$
since 
 for a non trivial 
local system $\chi$ on a circle one has $H^i(S^1, \chi)=0$.

From now on we assume that $\chi(\gamma_i)=1$ i.e. that the local 
system, on which cohomology we want to construct a mixed Hodge structure,
is the pullback $\theta^*(\chi)$ of a local system on $E_i^{\circ}$.
Notice that $R^q\theta_*(\C)$ are trivial local systems since 
circle fibrations over $E_i^{\circ}$ are restrictions of the 
circle fibrations over $E_i$ and $\pi_1(E_i)=0$. Leray spectral sequence,
which is a sequence of MHS (cf. \cite{Arapura} for a similar 
statement) has the term $E_2^{p,q}$ as follows:

\begin{equation}
\begin{array}{ccc}
H^0(E_i^{\circ},R^2\theta_*(\CC) \otimes \chi) & 0 
 & 0 \\
H^0(E_i^{\circ},R^1\theta_*(\CC) \otimes \chi) & 
H^1(E_i^{\circ},R^1\theta_*(\CC) \otimes \chi) & 
0 \\
H^0(E_i^{\circ},R^0\theta_*(\CC) \otimes \chi) & 
H^1(E_i^{\circ},R^0\theta_*(\CC) \otimes \chi) 
 &
H^2(E_i^{\circ},R^0\theta_*(\CC) \otimes \chi) 
\end{array}
\end{equation}

It yields hence: 

\begin{equation}
  H^2((\partial T(E_i^{\circ}),\theta^*(\chi)))=
H^1(E_i^{\circ},R^1\theta_*(\CC) \otimes \chi)=
H^1(E_i^{\circ},\chi)(-1)
\end{equation}
(the Tate twist is the contribution of $R^1\theta_*(\CC)$ i.e. 
a shift of weight by 2)
and 
\begin{equation}\label{pullback}
0 \rightarrow H^0(E_i^{\circ},R^1\theta_*(\CC) \otimes \chi) \rightarrow
  H^1(\partial T(E_i^{\circ}),\theta^*(\chi)) \rightarrow 
H^1(E_i^{\circ},\chi) \rightarrow 0
\end{equation}

We can assume that $\chi \in {\rm Char}\pi_1(E_i^{\circ})$ is non trivial 
since the case $\chi=1$ was treated in \cite{Durfee}.
If $\chi \ne 1$ then $H^0(E_i^{\circ}, \chi)=0$ and 
the above exact sequence shows that the  
MHS on the cohomology of $\partial T(E_i^{\circ})$ 
with coefficients in a local system 
coincides with the MHS 
on the cohomology  
of $E_i^{\circ}$:
\begin{equation}\label{oneisomorphism}
H^1(\partial T(E_i^{\circ}),\theta^*(\chi))=H^1(E_i^{\circ},\chi)
\end{equation}

 The MHS 
on the cohomology of local systems on a quasiprojective manifolds 
and relevant MHC were constructed in \cite{Timm}.

To obtain MHS on $(\bigcup_i \partial T(E_i))-D^*$ we first observe 
that 
\begin{equation}\label{removingcomponents}
H^i((\bigcup_i \partial T(E_i))-D^*, \chi)=H^i((\bigcup_i \partial T(E_i'))
-D^*-\bigcup E_i'',\chi)
\end{equation} 
where $E_i'$ are the components such that $\chi(\gamma_{E_i'})=1$ and 
$E_i''$ are the components such that $\chi(\gamma_{E_i''}) \ne 1$.
Indeed, the Mayer Vietoris spectral sequences calculating
the cohomology of the union in both sides of 
(\ref{removingcomponents}) coincide. These spectral sequences are
respectively
\begin{equation}
  E_2^{p,q}=H^p((\partial T(E_i^{\circ})-D^*)^{[q]}, \chi)
\Rightarrow H^{p+q}((\bigcup_i \partial T(E_i))-D^*, \chi)
\end{equation}
and 
\begin{equation}
  E_2^{p,q}=H^p((\partial T({E_i'}^{\circ})-D^*-\bigcup E_i'')^{[q]}, \chi)
\Rightarrow H^{p+q}((\bigcup_i \partial T(E_i'))-D^*-\bigcup E_i'', \chi)
\end{equation}
Here the superscript $\cdot^{[q]}$ denotes a $q+1$-fold intersections.
In our case these intersections are empty if $q=2$ and for $q=1$ they 
are equivalent to the products of punctured disks denoted earlier as 
$\partial_{i,j}$. Notice that we denoted 
by ${E'}^{\circ}$ 
a non singular stratum of stratification of the union 
of components $E'=\bigcup_iE_i'$ for which $\chi(\gamma_{E_i'})=1$ 
and that one obvious identity 
$\bigcup_i E_i^{\circ}=\bigcup {E_i'}^{\circ}-\bigcup E_i''$
yields the identification of the terms $E_2$ (and differentials $d_2$) 
of the above spectral sequences.

On the other hand the local system on 
$(\bigcup_i \partial T(E_i'))
-D^*-\bigcup E_i'')$ (as a consequence of $\chi(\gamma_{E_i'})=1$)
 is the pullback of a local system on 
$(\bigcup_i T(E_i'))
-D^*-\bigcup E_i'')$. Next take a projective surface $Z$ containing 
$\bigcup_i T(E_i)$ and extend the unitary connection corresponding 
to the local system on $(\bigcup_i T(E_i'))
-D^*-\bigcup E_i'')$ from 
$(\bigcup_i T(E_i'))
-D^*-\bigcup E_i'')$ to a meromorphic connection on $Z$.
Then the complement $Z^{\circ}$ to the polar set of this connection 
is a quasiprojective manifold, containing $(\bigcup_i T(E_i'))
-D^*-\bigcup E_i'')$ and supporting unitary local system extending 
the local system $\chi$ on 
$(\bigcup_i \partial T(E_i'))
-D^*-\bigcup E_i'')$. Now one applies the same construction of MHC 
calculating the cohomology of local system $\chi$ as in \cite{Durfee}
so that the following Mayer-Vietoris sequence 
is a sequence of MHSs (we denote by the same letter 
$\chi$ the extensions 
of local system on $(\bigcup_i \partial T(E_i'))
-D^*-\bigcup E_i'')$:

\begin{equation}
 \rightarrow H^i((\bigcup_i \partial T(E_i'))
-D^*-\bigcup E_i'', \chi) \rightarrow 
\end{equation}
$$H^i(Z^{\circ}-\bigcup {E_i^{\circ}}',
 \chi)
\oplus H^i((\bigcup_i T(E_i'))
-D^*-\bigcup E_i'', \chi) \rightarrow H^i(Z^{\circ},\chi) \rightarrow
$$
To obtain the Hodge numbers of 
the mixed Hodge structure on the cohomology of the 
union $H^*(\partial T(\bigcup_i E_i^{\circ}))$ we use the
Mayer-Vietoris spectral sequence which since all triple intersections
are empty becomes the exact sequence:

\begin{equation}\label{mayervietoris}
  \oplus_{i,j} H^0(\partial_{i,j}, \phi_{i,j}^*(\chi))    \rightarrow 
H^1(\partial T(\bigcup_i E_i^{\circ}),\chi) \rightarrow
\oplus_i H^1(\partial T(E_i^{\circ}),\phi_i^*(\chi)) \rightarrow
\end{equation}
$$\rightarrow 
\oplus_{i,j} H^1(\partial_{i,j}, \phi_{i,j}^*(\chi))
$$

Note that $H^1(\C^*)=H^{1,1}(\C^*)=\C$ and hence 
$H^1(\partial_{i,j}, \phi_{i,j}^*(\chi))$ 
(which is non zero only if $\phi_{i,j}^*(\chi)$ is trivial)
has pure Hodge structure of weight $2$.
Moreover, the map 
\begin{equation}
W_2/W_1(\oplus_i H^1(\partial T(E_i^{\circ}),\phi_i^*(\chi))
\rightarrow \oplus_{i,j} H^1(\partial_{i,j}, \phi_{i,j}^*(\chi))
\end{equation}
is injective since the classes of weight 2 correspond to the 
cohomology of the fiber of the map $\partial T(E_i^{\circ}) 
\rightarrow E_i^{\circ}$ i.e. the term 
$H^0(E_i^{\circ},R^1\theta_*(\CC) \otimes \chi)$
in (\ref{pullback}).

Hence we obtain the following exact sequence 
which determines the Hodge numbers of $H^1(S^3-L,\chi)$:
\begin{equation}\label{mayervietoris2}
  \oplus_{i,j} H^0(\partial_{i,j}, \phi_{i,j}^*(\chi))    \rightarrow 
H^1(\partial T(\bigcup_i E_i^{\circ}),\chi) \rightarrow
W_1(\oplus_i H^1(\partial T(E_i^{\circ}),\phi_i^*(\chi))) 
\rightarrow 0
\end{equation}

We shall sum up the above discussion in following

\begin{prop} The isomorphism (\ref{oneisomorphism}) and 
exact sequence (\ref{mayervietoris2})
of MHS determine for the MHS on $H^1(S^3-L,\chi)$
the only non zero 
Hodge numbers $h^{p,q,1}(S^3-L,\chi) \ \ (0 \le p+q \le 1)$ 
uniquely. Moreover, the Hodge numbers $h^{p,q,1}(S^3-L,\chi)$
with $p+q=1$ depend only on the Hodge structure on 
$\oplus_{i \in R} H^1(\partial T(E_i^{\circ}),\phi_i^*(\chi)$)
where $R$ is the set of components of $E$ which are intersected
by at least three other components of exceptional 
divisor while $h^{0,0,1}(S^3-L,\chi)$ depends 
on the cohomology (of dimension zero) of pair 
$(\partial T(\bigcup_{i \in R} E_i),\partial T(\bigcup_{i \in R} E_i
\cap \bigcup_{i \notin R} E_i))$
\end{prop}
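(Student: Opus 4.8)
The plan is to extract the Hodge numbers of $H^1(S^3-L,\chi)$ from the two structural inputs already established: the isomorphism (\ref{oneisomorphism}), which identifies the MHS on each $H^1(\partial T(E_i^{\circ}),\phi_i^*(\chi))$ with that on $H^1(E_i^{\circ},\chi)$, and the short exact sequence (\ref{mayervietoris2}), which presents $H^1(S^3-L,\chi)$ as a quotient built from these local pieces modulo the image of the weight-one cohomology of the punctured-disk intersections $\partial_{i,j}$. The first observation is that the weight filtration splits the problem: $H^1(\partial_{i,j},\phi_{i,j}^*(\chi))$ is pure of weight $2$ (or zero), so the map $W_2/W_1$ of $\oplus_i H^1(\partial T(E_i^{\circ}),\phi_i^*(\chi))$ into $\oplus_{i,j}H^1(\partial_{i,j},\phi_{i,j}^*(\chi))$, which is injective by the argument already given (the weight-$2$ classes are exactly the fiber classes $H^0(E_i^{\circ},R^1\theta_*(\CC)\otimes\chi)$ in (\ref{pullback})), kills off precisely the weight-$2$ part. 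Hence $H^1(S^3-L,\chi)$ carries only weights $\le 1$, and (\ref{mayervietoris2}) computes it as the cokernel of $\oplus_{i,j}H^0(\partial_{i,j},\phi_{i,j}^*(\chi)) \to W_1(\oplus_i H^1(\partial T(E_i^{\circ}),\phi_i^*(\chi)))$. Since every map in sight is a morphism of MHS and (by strictness) the Hodge numbers of a cokernel of such a morphism are determined, the numbers $h^{p,q,1}$ with $p+q=1$ and the number $h^{0,0,1}$ are all pinned down.

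Next I would prove the reduction to the set $R$ of components meeting at least three other exceptional components. The key is that $H^1(\partial T(E_i^{\circ}),\phi_i^*(\chi)) = H^1(E_i^{\circ},\chi)$ and $E_i \cong \BP^1$, so $E_i^{\circ}$ is $\BP^1$ minus the points $E_i \cap E_j$ and $E_i \cap D^*$. If $E_i$ meets the rest of the configuration in at most two points and $\chi$ restricted to $E_i^{\circ}$ is nontrivial, then $E_i^{\circ}$ is either $\C^*$ or $\C$ or $\BP^1$, whose $H^1$ with a nontrivial rank-one local system vanishes (or, in the $\C^*$ case, is pure weight $2$ and hence already removed by the previous paragraph). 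The only subtlety is to handle the bookkeeping of which punctures come from $D^*$: a branch of $D^*$ through $E_i^{\circ}$ contributes a puncture but the monodromy of $\chi$ around it is $\chi(\gamma_i)\ne 1$ by (\ref{chargammai}), so such a component $E_i$ is already excluded by the condition $\chi(\gamma_{E_i})=1$ needed for $H^*(\partial T(E_i^{\circ}),\chi)\ne 0$. Thus in (\ref{mayervietoris2}) only the terms indexed by $i\in R$ survive in the weight-one part, and the weight-one Hodge numbers of $H^1(S^3-L,\chi)$ depend only on $\oplus_{i\in R}H^1(\partial T(E_i^{\circ}),\phi_i^*(\chi))$.

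For $h^{0,0,1}$, the remaining claim, I would restrict (\ref{mayervietoris2}) to weight $\le 1$ but extract only the weight-$0$ (equivalently $H^{0,0}$) part; since $H^0$ of a punctured disk and $H^0(\partial T(E_i^{\circ}))$ are both of weight $0$, while $H^1(E_i^{\circ},\chi)$ for $i\in R$ is of weight $1$, the weight-$0$ piece of the cokernel is computed entirely by the truncation of the Mayer–Vietoris complex in cohomological degrees $0$ and $1$ restricted to the subconfiguration $\bigcup_{i\in R}E_i$, i.e. by the relative cohomology $H^*(\partial T(\bigcup_{i\in R}E_i), \partial T(\bigcup_{i\in R}E_i \cap \bigcup_{i\notin R}E_i))$ in degree zero. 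One has to check that attaching or detaching the components outside $R$ only affects $H^1$ through weight $\ge 1$ classes, which again follows from the vanishing/purity statements for $E_i^{\circ}$ with $i\notin R$; this identifies the weight-zero part with the asserted relative group.

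The main obstacle I anticipate is not any single computation but the careful tracking of weights through (\ref{mayervietoris2}): one must verify that the spectral sequence differentials and the Mayer–Vietoris maps are strict morphisms of MHS in this local, non-compact, non-finite-order setting, so that passing to cokernels is compatible with the Hodge and weight filtrations — this is where the cited constructions of \cite{Timm}, \cite{Arapura}, \cite{Durfee} do the real work, and the proof should invoke them explicitly rather than re-derive strictness. A secondary point requiring care is the case analysis for components $E_i$ with exactly one or two neighbors and possibly several branches of $D^*$ passing through, to be sure no weight-one contribution is overlooked; organizing this by the genus-and-puncture count of $E_i^{\circ}\cong\BP^1\setminus\{\text{finitely many points}\}$ makes it routine.
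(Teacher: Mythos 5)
Your proof follows essentially the same route as the paper's one‑line argument: strip off the weight‑$2$ part via the injectivity of $W_2/W_1 \to \oplus_{i,j}H^1(\partial_{i,j},\cdot)$, read off weights $\le 1$ of $H^1(S^3-L,\chi)$ from the cokernel presentation (\ref{mayervietoris2}), and observe that a component $E_i$ with at most two punctures has $E_i^{\circ}\in\{\BP^1,\C,\C^*\}$ and hence $H^1(E_i^{\circ},\chi)$ is either zero or pure of weight $2$. You supply more detail than the paper on the $h^{0,0,1}$ case and on strictness, which is useful, but the mechanism is the same.

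There is, however, one concrete misstep in your handling of the ``bookkeeping'' paragraph. You assert that a branch of $D^*$ passing through $E_i^{\circ}$ forces $\chi(\gamma_{E_i})\ne 1$ and thereby excludes $E_i$. This is a non‑sequitur: $\gamma_{E_i}$ is the fiber class of $\partial T(E_i^{\circ})\to E_i^{\circ}$, and $\chi(\gamma_{E_i})=\prod_j\chi(\gamma_j)^{a_{j,i}}$ where $a_{j,i}={\rm mult}_{E_i}\pi^*f_j$; whether this product equals $1$ is governed by the multiplicities of \emph{all} branches along $E_i$ and by the values $\chi(\gamma_j)$, not by whether a given proper transform meets $E_i$. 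The correct way to deal with $D^*$ is simpler and is what the paper's proof implicitly does: read ``intersections'' in the definition of $R$ as counting \emph{all} punctures of $E_i^{\circ}$, including those from $D^*$, so that $E_i\notin R$ genuinely means $E_i^{\circ}\in\{\BP^1,\C,\C^*\}$. Without that reading your argument misses the potential case of $E_i$ with two $E$‑neighbors and one branch of $D^*$, for which $E_i^{\circ}\cong\BP^1\setminus\{\text{3 points}\}$ and $H^1(E_i^{\circ},\chi)$ may have nonzero weight‑$1$ part; once punctures from $D^*$ are counted, that $E_i$ lands in $R$ and the trichotomy $\BP^1/\C/\C^*$ applies cleanly for all $i\notin R$.
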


The last part which concerns the components with at least three 
intersections with remaining ones follows from the exact sequence
(\ref{mayervietoris2}) since for the component with only two intersections
the curve $E_i^{\circ}$ is homotopy equivalent to a circle
and hence $$H^1(\partial T({E_i'}^{\circ},\phi^*(\chi)))=
H^1(E_i'^{\circ},\chi))=0$$ for non trivial $\chi$ while 
for trivial $\chi$ one has weight two Hodge structure
also does not contribute to the 
cohomology of link.

\begin{remark} In \cite{Hodge} the MHS on the cohomology 
of local systems of finite order was obtained from 
the MHS on the finite abelian branched cover (cf. Prop. 
\ref{quasiadjunctionmhs}).
\end{remark}

\subsection{Depth of Characters}

In this subsection we relate the ideals of quasiadjunction (cf. 
section \ref{sectiononideals}), 
and specifically ${\rm dim}\mathcal{A}''/\mathcal{A}'$ and
 ${\rm dim}\mathcal{A}'/\mathcal{A}$
 to the dimensions of $Gr^p_FGr_q^WH^1(L_{\chi})$. Here
 $L_{\chi}$ is the  
local system corresponding to the character $\chi$ of the 
local fundamental group of the complement to the germ $\mathcal{C}$
taking the value $exp(2 \pi \sqrt{-1} {{j_i+1} \over {m_i}})$ 
on the generator $\gamma_i \in \pi_1(S^3-L)$ 
corresponding to factor $f_i$ (cf. Prop. \ref{quasiadjunctionmhs}).

Recall (cf. \cite{Hodge}) that depth of 
$\chi \in {\rm Char}(\pi_1(S^3-L))$ is 
the dimension ${\rm dim} H^1(S^3-L,\chi)$ of the cohomology 
of the complement with the coefficients in the rank one local system 
corresponding to $\chi$. 
The data of ${\rm dim} Gr^p_FGr_q^WH^1(L_{\chi})$
determines the Hodge theoretical refinement of the depth
(cf. \ref{holomdepth}) which depends only on the data of 
ideals of quasiadjunction. We show how this data 
(determined by the ideals of quasiadjunction) yields the depth 
of a character $\chi$. 

\begin{prop}\label{quasiadjunctionmhs} Let $\phi: {\mathbb Z}^r \rightarrow 
\oplus_{i=1}^{i=r} {\mathbb Z}/m_i$ 
be the surjection and let $\partial V_{m_1,..,m_r}$ be the corresponding 
covering space of $S^3$ branched over the link $L$ which 
we view as the link of singularity (cf. (\ref{zandf})). 

Let $\chi$ be a character belonging to the image of the embedding:
$Char \oplus_{i=1}^{i=r} {\mathbb Z}/m_i \rightarrow Char {\mathbb Z}^r$
induced by $\phi$. Assume that $\chi(\gamma_i) \ne 1$ for any $i=1,...,r$.
Then

(a) $H^1(S^3-L,L_{\chi})=\{v \in H^1(\partial V_{m_1,...,m_r}) \vert g \cdot v
=\chi(g)v \}$
and 

(b) the mixed Hodge structure on $H^1(S^3-L,L_{\chi})$ is induced
by the mixed Hodge structure on $H^1(\partial V_{m_1,...,m_r})$
\end{prop}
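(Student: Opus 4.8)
The plan is to realize both statements as consequences of the fact that the branched cover $\partial V_{m_1,\dots,m_r} \to S^3$, with the $D^*$-strata removed, is exactly the quotient by the Galois group $G = \oplus_i \Z/m_i$ of an unbranched cover of $S^3 - L$, so that standard transfer/eigenspace decomposition applies, and then to check that this decomposition is compatible with the mixed Hodge structures constructed in the previous subsection. First I would set $W = \partial V_{m_1,\dots,m_r} - p^{-1}(L)$; the restriction of $p$ to $W$ is an unramified Galois cover of $S^3 - L$ with group $G$, classified by the surjection $\phi \colon \pi_1(S^3-L) = \Z^r \twoheadrightarrow G$. For any character $\chi$ of $G$, pulled back to a character of $\Z^r$, the standard transfer argument (using that $|G|$ is invertible in $\C$) gives $H^1(S^3-L, L_\chi) = H^1(W,\C)_\chi$, the $\chi$-eigenspace for the $G$-action on $H^1(W,\C)$. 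So for (a) the remaining point is to pass from $W = \partial V - p^{-1}(L)$ to the closed-up branched cover $\partial V_{m_1,\dots,m_r}$: under the hypothesis $\chi(\gamma_i) \neq 1$ for all $i$, the local monodromy of $L_\chi$ around every branch is nontrivial, so the $\chi$-eigenspace ``sees nothing'' near the ramification locus and one gets $H^1(W,\C)_\chi = H^1(\partial V_{m_1,\dots,m_r},\C)_\chi$. I would make this precise by the same Mayer–Vietoris / neighborhood argument already used in the previous subsection: decompose $\partial V$ into the pullbacks of the pieces $\partial T(E_i^\circ)$ and their intersections $\partial_{i,j}$, and observe that on the cover the extra boundary tori coming from $p^{-1}(L)$ contribute a $G$-representation that contains no copy of $\chi$, because $\chi$ is nontrivial on each $\gamma_i$; removing them does not change the $\chi$-eigenspace. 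This is the analogue for the branched cover of the identity \eqref{removingcomponents}.

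For part (b), the point is that the $G$-action on $\partial V_{m_1,\dots,m_r}$ is by algebraic automorphisms of a quasiprojective (indeed, by gluing along the strata, one can build it inside a projective model as in the previous subsection) object, hence acts on the mixed Hodge structure of $H^1(\partial V_{m_1,\dots,m_r})$ by automorphisms of MHS. The eigenspace decomposition $H^1(\partial V) = \bigoplus_\chi H^1(\partial V)_\chi$ is therefore a decomposition in the category of complex mixed Hodge structures, and each $H^1(\partial V)_\chi$ inherits a canonical sub-MHS. One then has to identify this inherited structure with the one on $H^1(S^3-L,L_\chi)$ constructed via \eqref{oneisomorphism} and \eqref{mayervietoris2}. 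I would do this by checking that the isomorphism of part (a) is induced by a morphism of the mixed Hodge complexes used to compute the two sides: the Mayer–Vietoris complex for $\partial V$, decomposed into $G$-isotypic summands, maps isomorphically onto the twisted Mayer–Vietoris complex \eqref{mayervietoris} for $S^3-L$ with coefficients in $L_\chi$. Concretely, on each stratum $\partial T(E_i^\circ)$ the $\chi$-eigenpart of $H^*(p^{-1}(\partial T(E_i^\circ)),\C)$ is $H^*(\partial T(E_i^\circ), \phi_i^*(\chi))$ with its MHS from the logarithmic complex of \cite{Timm}, and these identifications are functorial for the gluing maps $\phi_{i,j}$; assembling them gives the compatibility of the whole diagram \eqref{mayervietoris2} with the $\chi$-eigenspace filtration.

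The main obstacle I expect is precisely the last compatibility: making rigorous that the ``obvious'' topological eigenspace isomorphism of (a) is realized at the level of the mixed Hodge complexes, rather than merely yielding abstractly isomorphic MHS on each side. The subtlety is that the MHS on $H^1(S^3-L,L_\chi)$ was defined through a specific chain of constructions (a meromorphic extension of the unitary connection to a projective model $Z$, the mixed Hodge complex of \cite{Durfee}/\cite{Timm} on $Z^\circ$, then Mayer–Vietoris), and one must check the branched cover $\partial V$ can be fitted into a compatible projective picture on which $G$ acts holomorphically, so that all the auxiliary choices can be made $G$-equivariantly. Once equivariance of the whole construction is arranged, taking $\chi$-isotypic components commutes with every step (it is an exact functor on $\C[G]$-modules, and $G$ acts by MHS automorphisms), and both (a) and (b) drop out; I would isolate this equivariance as the key lemma and reduce everything else to bookkeeping with the Mayer–Vietoris sequence already established above.
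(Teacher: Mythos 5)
Your proposal is correct and follows essentially the same strategy as the paper: identify $H^1(S^3-L,L_\chi)$ with the $\chi$-eigenspace of $H^1$ of the unbranched cover $U_{m_1,\dots,m_r}=\partial V_{m_1,\dots,m_r}-p^{-1}(L)$, show the branched and unbranched covers have isomorphic $\chi$-eigenspaces because $\gamma_i$ acts trivially on the cohomology of the ramification boundary while $\chi(\gamma_i)\ne1$, and conclude (b) from the fact that deck transformations are holomorphic and hence act by MHS automorphisms. The only organizational differences are that the paper replaces your one-shot transfer argument by the chain-level induction of Lemma~\ref{induction} over the cyclic factors of the Galois group, and replaces your Mayer--Vietoris comparison by the long exact sequence of the pair $(\partial V_{m_1,\dots,m_r},U_{m_1,\dots,m_r})$ together with Alexander--Lefschetz duality (Lemma~\ref{afterinduction}); both variants reach the same conclusions.
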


\begin{proof} This proposition can be deduced from the following 
two lemmas.

\begin{lem}\label{induction} 
Let $G_i=\oplus_{j=1}^{j=i} 
{\mathbb Z}/{m_j}$ and let $G_r$ acts freely on 
a CW-complex $X$ and $X_i=X/G_i$. Let $\chi \in Hom(G,\C^*)$ 
be such that $\chi_i=\chi \vert_{G_i}$ is non-trivial. 
Let $L_{\chi_i}$ be the local system on $X_i$ induced by $\chi_i$.
Then 
$H^1(X_{i+1},L_{\chi_{i+1}})$ is the eigenspace of  
the generator of $G_{i+1}/G_i$ acting on 
$H^1(X_i,L_{\chi_i})$.
\end{lem}
%\noindent and 
\begin{lem}\label{afterinduction} 
Let $U_{m_1,...,m_r} \subset \partial V_{m_1,..,m_r}$
 be the complement to the
branching locus of the map 
$$\pi(m_1,...,m_r): \partial V_{m_1,..,m_r} \rightarrow S^3.$$
Let $\chi \in Char \pi_1(S^3-L)$ such that 
$\chi (\gamma_i) \ne 1$. Then  
$H^1(\partial V_{m_1,...,m_r}) \rightarrow H^1(U_{m_1,...,m_r})$
corresponding to the embedding induces an isomorphism 
of $\chi$-eigenspaces.
\end{lem}

Indeed,(a) in the proposition \ref{quasiadjunctionmhs} follows by
 applying \ref{induction} to the subgroups $G_i={\mathbb Z}/{m_1} 
\oplus ... \oplus {\mathbb Z}/{m_i}$ 
of the Galois cover $\partial V_{m_1,...,m_r} \rightarrow 
S^3$. Part (b) follows since the embedding in lemma \ref{afterinduction}
is a morphism of mixed Hodge structures and the deck transformation
preserves MHS on $H^1(U_{m_1,...,m_r})$ since they are induced 
by holomorphic maps.

In order to show \ref{induction} consider the exact sequence 
of chain complexes of $G/G_{i+1}$-modules in which 
we view $C_*(X_i)=C_*(X)^{G_i}$ as $G/G_i$-module
and in which the left map is induced by the multiplication by 
$t-\chi_{i+1}(\delta_{i+1})$ where $\delta_{i+1}$ is a generator 
of the cyclic group $G_{i+1}/G_i$ and $t$ is the generator
of $\C[G_i/G_{i+1}]$:
\begin{equation}
 0 \rightarrow 
     C_*(X_i) \otimes_{\chi_i} \C \rightarrow C_*(X_i) \otimes_{\chi_i} \C
\rightarrow C_* (X_{i+1}) \otimes_{\chi_{i+1}} \C 
\rightarrow 0
\end{equation}
This yields the corresponding cohomology sequence:

\begin{equation}
H^0(X_i,L_{\chi_i}) { \buildrel {t-\chi_i(\delta_i)}
\over\longrightarrow} 
H^0(X_i,L_{\chi_i}) \rightarrow 
\end{equation}
$$H^1(X_{i+1},L_{\chi_{i+1}})
\rightarrow H^1(X_i,L_{\chi_i}) \buildrel {t-\chi_i(\delta_i)} \over 
\rightarrow H^1(X_i,L_{\chi_i})
$$

The map in the upper line is surjective since $\chi_i(\delta_i) \ne 1$ and the 
lemma \ref{induction} follows.

To show lemma \ref{afterinduction} let us consider 
the exact sequence:
\begin{equation}
  H^1(\partial V_{m_1,...,m_r},U_{m_1,...,m_r}) \rightarrow
   H^1(\partial V_{m_1,...,m_r}) \rightarrow H^1(U_{m_1,...,m_r})
\end{equation}
$$ \rightarrow H^2(\partial V_{m_1,...,m_r},U_{m_1,...,m_r})
$$
of the pair 
$(\partial V_{m_1,...,m_r},U_{m_1,...,m_r})$.
Denoting by $T(\pi(m_1,...,m_r)^{-1}(Br))$ 
the regular neighborhood of the ramification 
locus of $\pi(m_1,...,m_r)$ and 
by $\partial \pi(m_1,...,m_r)^{-1}(Br))$ its boundary, 
we have the Galois equivariant identification 
of the relative cohomology:
\begin{equation}
H^i(\partial V_{m_1,...,m_r},U_{m_1,...,m_r})=
\end{equation}
$$
H^{i}(T(\pi(m_1,...,m_r)^{-1}(Br)),
\partial \pi(m_1,...,m_r)^{-1}(Br))
$$
$$=H^{3-i}(
\partial \pi(m_1,...,m_r)^{-1}(Br))
$$
For an element in the last group, corresponding to the
component of the branching locus given by $f_i=0$ 
the action of $\gamma_i$ on it is trivial.
Hence the map:
\begin{equation}
H^1(\partial V_{m_1,...,m_r})_{\chi} \rightarrow H^1(U_{m_1,...,m_r})_{\chi}
\end{equation}
is an isomorphism for a character $\chi$ which 
that $\chi(\gamma_i)\ne 1$ for all $i$.
\end{proof}

\begin{remark} Proposition \ref{quasiadjunctionmhs} is closely related 
to Prop. 4.5 in \cite{INNC}. A different type of relation 
(only for certain Hodge components)
between  cohomology of branched and unbranched covers 
of quasiprojective manifolds is given in \cite{ample} 
\end{remark}

\begin{propdefn}\label{holomdepth}Let $a$ belongs to the fundamental domain
of $H_1(S^3-L,{\bf Z})$ acting on the universal cover of the torus
of unitary characters $Char^u(\pi_1(S^3-L))$. The holomorphic 
weight one depth 
of $\chi =exp(2 \pi i a)$ (denoted $d^h(\chi)$) 
is the dimension of vector space $Gr^1_FGr^W_1H^1(L_{\chi})$.
The weight zero depth of a character $\chi$ is 
the dimension vector space 
$Gr^W_0H^1(L_{\chi})$. We denote it as $d^0(\chi)$.
An integer $w=0,1$ is a weight of $\chi$ if $d_0(\chi) \ne 0$
(resp. $d^h(\chi)\ne 0$). Total depth of $\chi$ is $d(\chi)=
d^h(\chi)+d^0(\chi)$.

The value of the depth (resp. holomorphic depth) 
of a character $exp(2 \pi i a)$
where $a$ belongs to the interior of the face is 
independent of $a$ and is 
called
the  depth (resp. holomorphic depth) 
of the face of quasiadjunction. 
\end{propdefn}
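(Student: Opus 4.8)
The plan is to compute the Hodge numbers determining $d^h(\chi)$ and $d^0(\chi)$ from the construction of the mixed Hodge structure on $H^1(S^3-L,\chi)$ given in the previous subsection, and to show that every quantity entering that construction is unchanged when $\chi=\exp(2\pi i a)$ is moved within a single stratum of the arrangement of hyperplanes $\{\sum_i a_{i,k}x_i=n:k,\ n\in\Z\}$, where $a_{i,k}=\mult_{E_k}\pi^*f_i$; since the relative interior of a face of quasiadjunction is contained in such a stratum, this gives the asserted independence of $a$. First I would use the preceding Proposition together with (\ref{oneisomorphism}) and the exact sequence (\ref{mayervietoris2}) to rewrite $d^h(\chi)=h^{1,0,1}(S^3-L,\chi)=\sum_{i\in R}\dim Gr^1_FGr^W_1H^1(E_i^\circ,\chi)$, and to note that $d^0(\chi)=h^{0,0,1}(S^3-L,\chi)$ is the dimension of the degree zero cohomology of the pair $(\partial T(\bigcup_{i\in R}E_i),\partial T(\bigcup_{i\in R}E_i\cap\bigcup_{i\notin R}E_i))$ appearing there, which is manifestly a function of $R$ and of the incidences of the $E_k$ alone.

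Next I would analyse the individual curves. For $i\in R$, $E_i^\circ$ is $\mathbb{P}^1$ with the points $p_j=E_i\cap E_{l_j}$ (and the points where the proper transform of $D$ meets $E_i$) deleted, and the descended character $\phi_i^*(\chi)$ has local monodromy of argument $\beta_j\in[0,1)$ at $p_j$, with $\beta_j=0$ exactly when $l_j\in R$ (or the corresponding branch meridian is trivial). Because $\chi(\gamma_{E_i})=1$ for $i\in R$, the plumbing relation shows the product of these local monodromies is a power of $\chi(\gamma_{E_i})$, hence equals $1$, so $N_i:=\sum_j\beta_j$ is an integer. On a stratum $\sigma$ of the arrangement the set $R$ and the incidence pattern are constant, hence so is the number $t_i$ of punctures of $E_i^\circ$ carrying nontrivial monodromy; and $N_i$, being a $\Z$-valued continuous function of $a$ on the connected set $\sigma$, is constant on $\sigma$. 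Since the weight one part of $H^1$ of a punctured $\mathbb{P}^1$ with a rank one local system is $H^1$ of the same $\mathbb{P}^1$ with only the nontrivially ramified points removed, and the dimension of its $(1,0)$-component equals $\max(0,t_i-1-N_i)$ (a holomorphic Euler characteristic computation with a Deligne extension on $\mathbb{P}^1$), the number $\dim Gr^1_FGr^W_1H^1(E_i^\circ,\chi)$ depends only on $t_i$ and the integer $N_i$, hence is constant on $\sigma$. Summing over $i\in R$ and adding $d^0$, the quantities $d^h$, $d^0$ and $d=d^h+d^0$ are constant on $\sigma$. (For characters of finite order this also follows from Proposition \ref{quasiadjunctionmhs} and the description of these Hodge numbers in \cite{Hodge} by dimensions of quotients $\mathcal{A}''/\mathcal{A}'$, $\mathcal{A}'/\mathcal{A}$ of ideals of quasiadjunction, which are constant on faces by the polytope characterization; the argument above is needed to cover all unitary $\chi$, since density of the finite-order characters together with upper semicontinuity of $\dim H^1$ is not by itself conclusive.)

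Finally I would check that the relative interior of a face of quasiadjunction lies in a single stratum $\sigma$: on it a fixed sub-collection of the inequalities (\ref{expliciteineq}) hold with equality and the others strictly, and each equality forces the value $\sum_i a_{i,k}x_i$ to equal a fixed integer, i.e.\ $\chi(\gamma_{E_k})=1$ with $k\in R$; since the polytopes $\mathcal{P}(\mathcal{A})$ are cut out by hyperplanes of the arrangement, their faces refine it, so $R$ and every residue $\sum_i a_{i,k}a_i\bmod\Z$ is constant along that relative interior. Hence $d^h$, $d^0$ and $d$ are constant there, and the depth (resp.\ holomorphic depth) of the face is defined to be the common value.

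The step I expect to be the main obstacle is the middle one: pinning down $\dim Gr^1_FGr^W_1H^1(E_i^\circ,\chi)$ as a function of $t_i$ and $N_i$ alone. This rests on the integrality of $N_i=\sum_j\beta_j$ — equivalently, the triviality of the product of the local monodromies along a rational exceptional curve whose meridian lies in $\ker\chi$ — and on the fact that the weight one Hodge filtration of the $H^1$ of a punctured $\mathbb{P}^1$ with a rank one local system changes only when a residue $\beta_j$ crosses an integer, which on $\sigma$ does not happen. A secondary point is the last step, where one must read ``face of quasiadjunction'' as a stratum of the hyperplane arrangement of quasiadjunction, so that $R$ and the residues are genuinely locally constant along it.
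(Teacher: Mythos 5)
Your proof is correct, but it takes a genuinely different route from the paper's. The paper's argument is a two-sentence sketch: for finite-order characters it invokes the identification from \cite{Hodge} of $d^h(\chi)$ (resp.\ $d^0(\chi)$) with the dimensions of quotients ${\mathcal A}'/{\mathcal A}$ (resp.\ ${\mathcal A}''/{\mathcal A}'$) of ideals of quasiadjunction, and then observes that these ideals are constant on a face of quasiadjunction by the defining property of the polytopes ${\mathcal P}({\mathcal A})$; for infinite-order characters it simply appeals to ``the definition of MHS'' from the preceding subsection. You instead work entirely from the resolution-based construction of the MHS in \S 3.1, locating the relevant invariants on the individual punctured exceptional curves $E_i^\circ$ and controlling $\dim Gr^1_F Gr^W_1 H^1(E_i^\circ,\chi)$ by the Deligne-extension computation $\max(0,t_i-1-N_i)$, where the integrality of $N_i=\sum_j\beta_j$ comes from the plumbing relation and $\chi(\gamma_{E_i})=1$. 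This is more explicit and is uniform in the order of $\chi$: it does not split into a finite-order argument via ideal quotients and a separate gesture for infinite order. It also supplies exactly the detail the paper's second sentence leaves implicit, and your observation that density of finite-order characters plus semicontinuity would not by itself give constancy is a fair point about why some such argument is actually needed. The one place to be careful is the last step, which you flagged: the assertion that the relative interior of a face of quasiadjunction meets no further hyperplane of the arrangement $\{\sum_i a_{i,k}x_i=n\}$ — i.e.\ that ``face of quasiadjunction'' should be read as a stratum of that arrangement — is an implicit convention in the paper rather than something proved there, and your argument correctly isolates it as the hypothesis on which constancy of $R$ and the residues $\beta_j$ rests.
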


\begin{proof} For the characters of finite order,
 it follows from the identification of the holomorphic 
depths with quotients of ideals of quasiadjunction since 
the ideals of quasiadjunction themselves depend only 
on the face of quasiadjunction. This and the definition 
of MHS for infinite order characters yields the claim in general. 
\end{proof}

\noindent We have the following. 

\begin{prop} If the values of character $\chi$ are $\pm 1$ then the 
depth of a character $\chi$ can be calculated as follows: 
$$d^h(\chi)+d^h(\bar \chi)+d_0(\chi)$$
where $\bar \chi$ is the conjugate character.
Otherwise the depth is equal to:
$$d^h(\chi)+d^h(\bar \chi)+d_0(\chi)+d_0(\bar \chi)$$
\end{prop}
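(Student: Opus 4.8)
\medskip
\noindent\textbf{Proof strategy.}
The plan is to evaluate $d(\chi)=\dim H^1(S^3-L,L_\chi)$ by cutting it along the weight filtration of the mixed Hodge structure described above, to identify the two lower graded pieces with $d_0(\chi)$ and $d^h(\chi)$, and to handle the remaining (weight one, antiholomorphic) piece by complex conjugation. First I would record that, by the Proposition summarizing (\ref{oneisomorphism}) and (\ref{mayervietoris2}), the only nonzero Hodge numbers of the mixed Hodge structure on $H^1(S^3-L,L_\chi)$ sit in bidegrees $(0,0)$, $(1,0)$ and $(0,1)$, so that $d(\chi)=h^{0,0,1}(\chi)+h^{1,0,1}(\chi)+h^{0,1,1}(\chi)$. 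Since $Gr^1_FGr^W_1H^1(L_\chi)$ is the $(1,0)$-part of the pure weight-one structure $Gr^W_1H^1(L_\chi)$, Proposition-Definition~\ref{holomdepth} gives $h^{1,0,1}(\chi)=d^h(\chi)$ and $h^{0,0,1}(\chi)=\dim Gr^W_0H^1(L_\chi)=d_0(\chi)$.

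Next I would bring in complex conjugation of the coefficient system. Because every local system occurring in the construction is unitary, conjugation of coefficients gives a conjugate-linear isomorphism $H^1(S^3-L,L_\chi)\cong H^1(S^3-L,L_{\bar\chi})$ which preserves the weight filtration and carries the Hodge filtration to its conjugate; this is built into the mixed Hodge complex of \cite{nonvanloci} (and of \cite{Timm}). Hence $h^{p,q,1}(\chi)=h^{q,p,1}(\bar\chi)$ for all $p,q$; in particular $h^{0,1,1}(\chi)=h^{1,0,1}(\bar\chi)=d^h(\bar\chi)$ and, at $p=q=0$, $d_0(\bar\chi)=d_0(\chi)$. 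Substituting into the previous display yields, with no restriction on $\chi$,
\[
d(\chi)=d^h(\chi)+d^h(\bar\chi)+d_0(\chi).
\]
If all the values of $\chi$ lie in $\{\pm1\}$ then $\bar\chi=\chi$ and this is verbatim the first asserted formula, the weight-zero space $Gr^W_0H^1(L_\chi)$ being a single space and thus counted once.

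It remains to treat a character with a value outside $\{\pm1\}$, where the claim is equivalent to the vanishing $d_0(\chi)=0$ (whence also $d_0(\bar\chi)=0$, so that $d_0(\chi)+d_0(\bar\chi)=d_0(\chi)$ and the second formula reduces to the identity just proved). For this I would analyze the weight-zero part via (\ref{mayervietoris2}): $Gr^W_0H^1(S^3-L,L_\chi)$ is the image of $\bigoplus_{i,j}H^0(\partial_{i,j},\phi_{i,j}^*(\chi))$, and the $(i,j)$-summand is nonzero only when $\chi(\gamma_{E_i})=\chi(\gamma_{E_j})=1$; moreover $H^0(\partial T(E_i^{\circ}),\phi_i^*(\chi))=0$ as soon as $E_i^{\circ}$ meets the proper transform of $D$, since $\chi(\gamma_k)\neq 1$ for the meridian $\gamma_k$ of a branch. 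Reducing to a character of finite order by the constancy of the depth on faces (Proposition-Definition~\ref{holomdepth}) and passing to the branched abelian cover by Proposition~\ref{quasiadjunctionmhs}, this weight-zero part becomes the $\chi$-eigenspace of $H^1$ of the dual graph of a resolution of $V_{m_1,\dots,m_r}$, and one must check that the deck group permutes the one-cycles of that graph so that only $\pm1$-valued characters contribute. That combinatorial statement -- identifying the weight-zero cohomology and its deck action precisely enough to see it is supported on $\pm1$-valued characters -- is the step I expect to be the main obstacle; by contrast the weight splitting together with the conjugation symmetry $h^{p,q}(\chi)=h^{q,p}(\bar\chi)$ is essentially formal once the mixed Hodge complexes are in hand.
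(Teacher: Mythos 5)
Your weight-one step matches the paper: the paper likewise uses $Gr^W_1 = H^{1,0}\oplus H^{0,1}$, the Galois-equivariance, and the conjugate-linear identification $H^{0,1}_\chi \cong \overline{H^{1,0}_{\bar\chi}}$ to obtain $\dim(Gr^W_1)_\chi = d^h(\chi)+d^h(\bar\chi)$. The divergence, and the genuine gap, is in the weight-zero part. The paper's own argument is short and direct: $Gr^W_0$ is defined over $\Z$, so the weight-zero contribution is $d_0(\chi)+d_0(\bar\chi)$ when $\chi\ne\bar\chi$ and $d_0(\chi)$ when $\chi=\bar\chi$. You instead invoke the conjugation symmetry $d_0(\chi)=d_0(\bar\chi)$ to get $d(\chi)=d^h(\chi)+d^h(\bar\chi)+d_0(\chi)$ in all cases, then read the ``otherwise'' alternative of the proposition as an implicit assertion that $d_0(\chi)=0$ whenever some value of $\chi$ lies off $\{\pm1\}$, and you explicitly acknowledge that you do not prove this vanishing. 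That is the missing step, and it is not one the paper proves or needs. Nor is there any reason to expect it: $Gr^W_0 H^1$ is controlled by cycles in the resolution graph of $V_{m_1,\dots,m_r}$, and a nontrivial cycle can perfectly well carry a character of order greater than two, so the dual-graph combinatorics you defer to cannot deliver the vanishing in general.

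A secondary point worth flagging: the tension you have produced is real and you should address it rather than try to argue around it by a vanishing theorem. Your deduction $d_0(\chi)=d_0(\bar\chi)$, taken with the literal definition $d_0(\chi)=\dim Gr^W_0 H^1(L_\chi)$ and the identification of $H^1(L_\chi)$ with the $\chi$-eigenspace of $H^1(\partial V_{m_1,\dots,m_r})$, would force the proposition's second formula to collapse to the first; meanwhile the paper's stated count of $\dim(Gr^W_0 H^1(\partial V))_\chi$ as $d_0(\chi)+d_0(\bar\chi)$ is at face value inconsistent with that same identification. The right move is to pin down precisely how $d_0$ in Proposition-Definition~\ref{holomdepth} (and its avatar $\dim\mathcal A''/\mathcal A'$) relates to the $\chi$-eigenspace of $Gr^W_0 H^1(\partial V_{m_1,\dots,m_r})$, i.e.\ to justify or correct the ``defined over $\Z$'' step the paper leans on, instead of positing and then failing to prove a vanishing.
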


\begin{proof} Since the action of the Galois group 
acting on $Gr^W_1=H^{1,0}\oplus H^{0,1}$ preserves this 
direct sum decomposition one has $(Gr^W_1)_{\chi}=
H^{1,0}_{\chi}+H^{1,0}_{\bar \chi}$. The vector 
space $Gr^W_0$ is defined over $\Z$ and hence the eigenspaces 
corresponding to conjugate characters contribute for
$d_0(\chi)+d_0(\bar \chi)$ unless the characters take 
values $\pm 1$ in which case the dimension of eigenspace
will be $d_0(\chi)$
\end{proof}

\begin{remark} This proposition is closely related to 
Prop. 3.2 in \cite{Hodge}. The contribution of 
characters for which $d_0(\bar \chi) \ne 0$ is missing
there however i.e. the sum in 3.2 should have the term
${\rm dim} {\mathcal A}''_{\Sigma}/{\mathcal A}'_{\Sigma}$ 
for $\chi$ taking values $\ne \pm 1$.
\end{remark}

{\bf Example} (cf.\cite{Hodge}) Consider the singularity $x^r-y^r=0$.
Faces of quasiadjunction are hyperplanes $H_l: x_1+...+x_r=l$
($l=1,...,r-2$) where the holomorphic depth of $H_l$ is 
$r-l-1$. If $\chi=exp(2 \pi i a)$ and $a \in H_l$ then $\bar \chi 
=exp(2 \pi i \bar a)$ where $\bar a \in H_{r-l}$. 
Hence by above proposition the depth of any character
in $t_1 \cdot \cdot \cdot t_r=1$
 is equal $r-l-1+r-(r-l)-1=r-2$. 

\bigskip

\subsection{Spectrum and faces of quasiadjunction}
Next let us consider the relation between Arnold-Steenbrink 
spectrum and the cohomology of local systems. Let us fix the 
zero set of a germ of holomorphic function: 
\begin{equation}\label{nonreducedequation}
f_1^{a_1} \cdot ...\cdot f_r^{a_r}=0
\end{equation}
in a ball $B_{\epsilon}: z_1\bar z_1+z_2 \bar z_2 \le \epsilon$ 
and denote it $Z(f_1,...,f_r)$.
Let $\gamma_i$ be the standard generators of
 $H_1(B_{\epsilon}-Z(f_1,...,f_r),\Z)=\Z^r$ 
used earlier (i.e. given by oriented loops having 
linking number with 
the germ of $f_i$ equal to $+1$).
%Let us consider the extension:
%\begin{equation}
% 0 \rightarrow \Z^r \rightarrow \Z^r \rightarrow \oplus_{i=1}^{i=r} 
%\Z/a_i\Z \rightarrow 0  \ \ \delta_i \rightarrow \gamma_i^{a_i}
%\end{equation}

Let $\U \subset \C^*$ denotes the group of complex numbers having modulus
one. 
For any $\xi \in \U$, let $\chi_{\xi}$ be the local 
system given by $\chi_{\xi}(\gamma_i)=\xi^{a_i}$.
As already mentioned, the 
cohomology $H^1(B_{\epsilon}-Z(f_1,...,f_r),\chi_{\xi}))$
support a mixed Hodge structure (cf. \cite{nonvanloci}). 
%In terms of this MHS the faces of quasiadjucntion can be described as follows:

Consider the Milnor fiber $M(a_1,...,a_r)$ of the singularity
(\ref{nonreducedequation}) and the mixed Hodge structure on 
$H^1(M(a_1,...,a_r))$ defined by \cite{Steenbrink77}.
Recall that the multiplicity of a spectral pair $(\alpha, w)$ where 
$-\alpha=n-p-\beta, 0 \le \beta <1, \alpha \notin {\bf Z}$
is the dimension of the eigenspace with 
the eigenvalue $exp(2 \pi \sqrt{-1}\alpha)$  of the semi-simple 
part of the monodromy acting on $H^n$ of the Milnor fiber of an 
$n$-dimensional 
singularity. Moreover, the eigenvalues corresponding to the Jordan 
blocks of size $1 \times 1$ (resp. $2 \times 2$) 
of the monodromy appear in the action 
of the semi-simple part of the monodromy 
$T_s$ 
\footnote{i.e. one has $T=T_sT_u$ where $T_s$ is semi-simple and 
$T_u$ is unipotent}
on $Gr^W_1$ (resp. $Gr^W_0$ and $Gr^W_2$ (cf. \cite{Steenbrink77}).

The multiplicity of the eigenvalue of the monodromy is related to 
the cohomology of local systems as follows:

\begin{prop}\label{localsystemspectrum}
 Let $m_{\xi}$ denotes the multiplicity of $\xi$ as 
the eigenvalue of the semi-simple part of the monodromy of singularity 
(\ref{nonreducedequation}) acting on $Gr^0_FH^1(M(a_1,...,a_r))$
 Then $$m_{\xi}=d({\chi_{\xi}})$$
\end{prop}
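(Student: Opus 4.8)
The plan is to relate the monodromy eigenspace of the Milnor fiber $M(a_1,\dots,a_r)$ to the cohomology of the local system $\chi_\xi$ on the complement $B_\epsilon - Z(f_1,\dots,f_r)$, and then match the Hodge-theoretic bookkeeping on both sides. The starting point is the classical fact that the Milnor fiber of the germ $f_1^{a_1}\cdots f_r^{a_r}$ is the cyclic cover of the complement $B_\epsilon - Z(f_1,\dots,f_r)$ associated with the homomorphism $\pi_1 \to \Z$ sending $\gamma_i \mapsto a_i$, with deck transformation the geometric monodromy $T$. Consequently, for $\xi \in \U$ the $\xi$-eigenspace of the semisimple part $T_s$ on $H^1(M(a_1,\dots,a_r))$ is identified with $H^1(B_\epsilon - Z(f_1,\dots,f_r), \chi_\xi)$, where $\chi_\xi(\gamma_i) = \xi^{a_i}$ — exactly the local system in the statement. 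This is in the same spirit as Lemma \ref{induction} and Lemma \ref{afterinduction} but now for an infinite cyclic cover; I would either cite the standard reference for the Milnor fiber as an infinite cyclic cover or give a short Wang-sequence argument. One must be a little careful because $Z$ is non-reduced: the reduced structure controls the topology of the complement, while the multiplicities $a_i$ enter only through the covering homomorphism, which is precisely what produces the twist $\xi^{a_i}$.

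Next I would track the mixed Hodge structures. The monodromy $T$ acts on $H^1(M(a_1,\dots,a_r))$ preserving the limit MHS, and by Steenbrink's theorem (recalled just before the statement) the $1\times 1$ Jordan blocks of $T$ contribute to $Gr^W_1$ while the $2\times 2$ blocks contribute symmetrically to $Gr^W_0$ and $Gr^W_2$. The hypothesis of the proposition singles out $Gr^0_F H^1(M)$, i.e. the part of the cohomology with $p = 0$ in the Hodge filtration. On $Gr^W_1 = H^{1,0}\oplus H^{0,1}$ the piece with $p=0$ is $H^{0,1}$, whose $\xi$-eigenspace has the same dimension as the $H^{1,0}$-piece of the $\bar\xi$-eigenspace, i.e. $d^h(\bar\chi_\xi)$ in the notation of Proposition-Definition \ref{holomdepth}; dually the $\bar\xi$-contribution to $Gr^0_F Gr^W_1$ gives $d^h(\chi_\xi)$. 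On the weight-zero part $Gr^W_0$ (which sits in the bottom of the Hodge filtration, so automatically in $Gr^0_F$) the $\xi$-eigenspace has dimension $d^0(\chi_\xi) = d_0(\chi_\xi)$. Putting these together, $m_\xi$ equals $d^h(\chi_\xi) + d^h(\bar\chi_\xi) + d_0(\chi_\xi)$ (plus $d_0(\bar\chi_\xi)$ when $\xi \ne \pm 1$, since then the weight-zero eigenspaces for $\xi$ and $\bar\xi$ are distinct over the reals), which is exactly the formula for $d(\chi_\xi)$ given in the Proposition preceding the Example. This is where I expect the main work: correctly identifying which Jordan blocks and which $Gr^p_F Gr^W_q$ pieces land in $Gr^0_F H^1(M)$, and reconciling the contributions of $\xi$ versus $\bar\xi$ with the $\pm 1$ case-split already flagged in the Remark after that Proposition.

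Finally I would check compatibility of the two MHS in play — the limit MHS on $H^1(M)$ and the MHS on $H^1$ of the local system constructed in Section \ref{faces} — so that the eigenspace decomposition is compatible with the weight and Hodge filtrations. Since the cyclic cover of $B_\epsilon - Z$ realizing the Milnor fiber is the same geometric object up to the finite-versus-infinite distinction handled by Propositions \ref{quasiadjunctionmhs}, \ref{holomdepth}, the identification of MHS should follow from the functoriality of the constructions (the branched-cover argument of Proposition \ref{quasiadjunctionmhs} gives the finite-order case, and Proposition-Definition \ref{holomdepth} extends the depth invariants continuously along faces to arbitrary unitary characters). The hard part is thus not any single computation but assembling these pieces so that the numerology of Steenbrink's spectral-pair multiplicities matches the numerology of the depths $d^h, d_0$; once the dictionary between $Gr^0_F H^1(M)$-eigenspaces and the Hodge components $Gr^1_F Gr^W_1$, $Gr^W_0$ of $H^1(L_\chi)$ is set up correctly, the equality $m_\xi = d(\chi_\xi)$ drops out of the formula in the Proposition before the Example.
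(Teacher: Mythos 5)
The overall plan — realize the Milnor fiber as a cyclic cover of $B_\epsilon - Z$, relate eigenspaces of the monodromy to twisted cohomology, then track the Hodge pieces — is the right spirit and is consistent with the paper's argument. But there are two concrete gaps, and the second one breaks the numerology.

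First, the opening identification is too coarse. You assert that the $\xi$-eigenspace of $T_s$ on $H^1(M(a_1,\dots,a_r))$ is $H^1(B_\epsilon - Z, \chi_\xi)$. This is not true: the $\xi$-eigenspace of $T_s$ on all of $H^1(M)$ also sees the $Gr^W_2$ part of the limit MHS, which has no counterpart in $H^1(L_{\chi_\xi})$ (that cohomology only has weights $0$ and $1$). The correct statement restricts to $\ker N = W_1 H^1(M)$, on which $T$ coincides with $T_s$ and where the Wang/Steenbrink sequence provides a MHS-compatible isomorphism with the cohomology of the punctured neighborhood on the semi-stable reduction; this is exactly what the paper's proof supplies (via the sequence (\ref{wang}) and the commutative diagram with $T - \lambda I$ and $T_s - \lambda I$). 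Your third paragraph invokes ``functoriality of the constructions'' as a placeholder, but this Wang/semi-stable-reduction step is the actual content, not a formality — without it the identification of $W_1 H^1(M)$ with local-system cohomology, compatibly with both the weight and Hodge filtrations, is unproven. (For the statement at hand this doesn't literally matter because $Gr^0_F$ kills $Gr^W_2$, but as written your identification would give the wrong answer for the full eigenspace multiplicity, and the passage to $Gr^0_F$ needs the $W_1$ step anyway to match the MHS.)

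Second, and more seriously, your final bookkeeping proves the wrong identity. You end up with $m_\xi = d^h(\chi_\xi) + d^h(\bar\chi_\xi) + d_0(\chi_\xi)\ (+\,d_0(\bar\chi_\xi))$ and call this ``exactly the formula for $d(\chi_\xi)$.'' It is not: by Proposition-Definition \ref{holomdepth}, $d(\chi_\xi) = d^h(\chi_\xi) + d^0(\chi_\xi)$ — two terms, involving $\chi_\xi$ only. The four-term expression you wrote is the formula from the unlabeled Proposition before the Example, and that proposition computes the \emph{depth} $\dim H^1(S^3 - L, \chi)$, a different invariant. The error is that you added both the $\xi$- and the $\bar\xi$-contributions when computing $m_\xi$, but $m_\xi$ is by definition the dimension of the $\xi$-eigenspace alone. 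Done correctly: $Gr^0_F H^1(M) = Gr^W_0 \oplus H^{0,1}$, so $m_\xi = \dim(Gr^W_0)_\xi + \dim(H^{0,1})_\xi$. The single conjugation you need is $\dim(H^{0,1})_\xi = \dim(H^{1,0})_{\bar\xi}$, and with the orientation convention under which the $\bar\xi$-eigenspace of $T_s$ on $W_1 H^1(M)$ matches $H^1(L_{\chi_\xi})$, this equals $d^h(\chi_\xi)$; similarly $\dim(Gr^W_0)_\xi = d^0(\chi_\xi)$. That gives $m_\xi = d^h(\chi_\xi) + d^0(\chi_\xi) = d(\chi_\xi)$ with no $\bar\chi$ terms. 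So: a conjugation does enter, but once and on one side, not as a symmetrization over $\chi_\xi$ and $\bar\chi_\xi$.
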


\begin{proof} We shall use the following Steenbrink-Wang sequence 
of mixed Hodge structure on the semi-stable reduction i.e. 
finite degree covering of the base $\Delta_s^* \rightarrow \Delta^*$ of the map
$F: B_{\epsilon}-Z(f_1,..f_r) \rightarrow \Delta^*$ such that
the pullback of (\ref{nonreducedequation}) 
$(B_{\epsilon}-Z(f_1,..f_r))_{s} \rightarrow \Delta_s^*$
has multiplicity one along components of the exceptional set.
We have the sequence of MHS (cf. \cite{SteenbrinkPeters}, p.275):

\begin{equation}\label{wang}
  H^1((B_{\epsilon}-Z(f_1,..f_r))_{s} \rightarrow 
H^1(M(a_1,...,a_r))
\buildrel log(T_u-I) \over \rightarrow H^1(M(a_1,...,a_r))(-1)
\end{equation}

The left term, which is the cohomology of punctured neighborhood,
on semi-stable reduction of a resolution, 
has the MHS defined in terms of the log-complex associated with exceptional 
divisor (cf. \cite{Steenbrink77}, \cite{SteenbrinkPeters}). The two remaining
terms have the limit Mixed Hodge structure with the weight filtration 
given in terms of the monodromy (cf. \cite{Steenbrink77}).
Both, the cohomology of local system as in proposition and the 
multiplicity of the eigenspace of $T_{s}$ have the same description
in terms of  $H^1((B_{\epsilon}-Z(f_1,..f_r))_{s})$ compatible 
with the MHS. Indeed, the mixed Hodge structure on 
$B_{\epsilon}-Z(f_1,..f_r)_s$ together with action 
of the deck transformation, which is holomorphic and hence
preserves MHS, yields the MHS on $B_{\epsilon}-Z(f_1,...,f_r)$.
The same expression takes place for the relation 
between the MHS on Milnor fiber and on semi-stable reduction.
More precisely, one has the commutative diagram comparing the 
two rows of (\ref{wang}):
\begin{equation}
 \begin{matrix} 0 & \rightarrow & 
H^1((B_{\epsilon}-Z(f_1,..f_r))_s) & \rightarrow & 
H^1(M(a_1,...,a_r))  \cr
  & &  T-\lambda I \ \     \downarrow &  &   T_{s} -\lambda 
I \ \ \downarrow \cr 
0 & \rightarrow & H^1((B_{\epsilon}-Z(f_1,..f_r))_{s}) & \rightarrow  & 
H^1(M(a_1,...,a_r)) 
\end{matrix} 
\end{equation}
where $T_{s}$ is the semi-simple part of the monodromy 
acting on the Milnor fiber and $T$ is the (holomorphic deck transformation).
The cokernel of the left map yields the cohomology of the local system 
corresponding to $\lambda$ and cohomology of the right is 
the $\lambda$-eigenspace of the semi-simple part of the monodromy 
acting on the cohomology of Milnor fiber. Since the horizontal 
maps are the maps of MHS we obtain the claim.
\end{proof}

Now we shall describe the faces of quasiadjunction of 
$f_1 \cdot \cdot \cdot f_r$
in terms of faces of quasiadjunction of functions
$f_1^{a_1} \cdot \cdot \cdot f_r^{a_r}$.  We shall use only
collections $(a_1,...,a_r) \in {\bf Z}^r_+$ for which 
$gcd(a_1,..,a_r)=1$ i.e. for which the Milnor fiber 
is connected. Let us consider the subgroup $\oplus (a_i{\bf Z})
 \subset {\bf Z}^r$ where $a_i{\bf Z}$ is the subgroup of index $a_i$.
If ${\mathcal U}$ is the unit cube in ${\bf R}^r$, which we view as  
the fundamental domain of ${\bf Z}^r$ acting on 
${\bf R}^r$ via translations, then the fundamental domain
of $\oplus (a_i{\bf Z})$ is the union of $\Pi a_i$ unit cubes i.e. 
the subset of ${\bf R}^r$ given by:

$$\{(x_1,....,x_r) \in {\bf R}^r \vert 0 \le x_i \le a_i \}$$

We shall denote this subset of ${\bf R}^r$ by ${\mathcal U}(a_1,...,a_r)$ 
or ${\mathcal U}({\bf a})$. Each face of quasiadjunction via translations
yields $\Pi a_i$ corresponding polytopes in ${\mathcal U}({\bf a})$. 
The collection of translates of a face $F$ in ${\mathcal U}({\bf a})$
we shall denote $F({\bf a})$.   

\bigskip

\begin{prop}The rational numbers $\in (0,1)$ 
belongs to the spectrum of singularity 
$\Pi f_i^{a_i}$ iff $(a_1s,...,a_rs)$ belongs to a translate 
$F' \in F({\bf a}) \subset {\mathcal U}({\bf a})$
for some face of quasiadjunction $F$.
Moreover the multiplicity of $s$ in the 
spectrum is the depth of the face which translate the segment 
$L_{\bf a}=(a_1t,...,a_rt)\ \ (t \in (0,1))$ 
intersects at the point corresponding to $t=s$.

\par Vice versa, a face of quasiadjunction of depth $k$ is a connected
component of closure of points $(b_1,..,b_r) \in {\mathcal U} \cap {\bf Q}$
such that for some $s \in (0,1)$ and $(a_1,..,a_r) \in {\bf Z}^r_{+}
\ \ (gcd(a_i)=1)$
one has $b_i=a_is$ and $s$ is an element of the spectrum of 
singularity $\Pi f_i^{a_i}$ having multiplicity $k$.   

\end{prop}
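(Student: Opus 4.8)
The plan is to deduce this proposition from Proposition \ref{localsystemspectrum} together with the translation-theoretic description of how a face of quasiadjunction $F$ of $f_1\cdots f_r$ relates to the cube $\mathcal U(\mathbf a)$. The key observation is that for a fixed $s\in(0,1)$, the character $\chi_{\xi}$ with $\xi=\exp(2\pi\sqrt{-1}\,s)$ appearing in Proposition \ref{localsystemspectrum} is precisely the character sending $\gamma_i$ to $\exp(2\pi\sqrt{-1}\,a_i s)$, and hence corresponds — via the exponential map $\exp:\mathcal U(\mathbf a)\to{\C^*}^r$ whose fundamental domain is $\mathcal U(\mathbf a)$ — to the point $(a_1 s,\dots,a_r s)\bmod\mathbb Z^r$ lying on the segment $L_{\mathbf a}$. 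So $s\in(0,1)$ is in the spectrum of $\prod f_i^{a_i}$ with multiplicity $k$ iff, by Proposition \ref{localsystemspectrum}, $d(\chi_{\xi})=m_\xi=k$, and by the last assertion of Proposition-Definition \ref{holomdepth} the depth $d(\chi_{\xi})$ depends only on the face of quasiadjunction containing the representative of $(a_1 s,\dots,a_r s)$ in the unit cube $\mathcal U$. Translating that face by the appropriate element of $\mathbb Z^r$ so that it sits in $\mathcal U(\mathbf a)$ gives a translate $F'\in F(\mathbf a)$ which $L_{\mathbf a}$ meets exactly at the point with parameter $t=s$; its depth is $k$. This proves the first two sentences.

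First I would make the correspondence between characters and points of $\mathcal U(\mathbf a)$ completely explicit: the group $\bigoplus a_i\mathbb Z\subset\mathbb Z^r$ has fundamental domain $\mathcal U(\mathbf a)$, and the composite $\mathcal U(\mathbf a)\to\mathbb R^r/(\bigoplus a_i\mathbb Z)\to{\C^*}^r$, $(x_1,\dots,x_r)\mapsto(\exp(2\pi\sqrt{-1}\,x_1/a_1),\dots)$ — or, more usefully here, reading $L_{\mathbf a}$ inside $\mathcal U(\mathbf a)$ directly as $t\mapsto(a_1 t,\dots,a_r t)$ — hits $\chi_{\xi}$ with $\xi=\exp(2\pi\sqrt{-1}\,t)$. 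Next I would record that a point $(a_1 s,\dots,a_r s)\in\mathcal U(\mathbf a)$ reduces mod $\mathbb Z^r$ to a point of $\mathcal U$ which lies in some face of quasiadjunction $F$ precisely when its depth (equivalently $\dim H^1(S^3-L,\chi_{\xi})$, by the definition of depth) is positive; the translate of $F$ that contains $(a_1 s,\dots,a_r s)$ is then the desired $F'\in F(\mathbf a)$. The depth being locally constant on the interior of a face (Proposition-Definition \ref{holomdepth}) is what makes the multiplicity well defined as an invariant of the face.

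For the converse I would argue that, conversely, given a face of quasiadjunction $F$ of depth $k$, every rational point $(b_1,\dots,b_r)$ in the interior of $F$ can be written as $(a_1 s,\dots,a_r s)$ for suitable coprime positive integers $a_i$ and $s\in(0,1)$ — take $a_i/a_j=b_i/b_j$ in lowest terms and $s=b_1/a_1$ — so that $(b_1,\dots,b_r)=L_{\mathbf a}(s)$ and, by the direct half just proved, $s$ lies in the spectrum of $\prod f_i^{a_i}$ with multiplicity equal to the depth $k$ of $F$; that the set of such points is exactly a connected component of the (closure of the) locus of rational points realizing some spectral value of multiplicity $k$ follows because distinct faces have disjoint interiors and the depth is constant on each. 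The main obstacle is the bookkeeping in the converse: one must check that the coprimality normalization of $(a_1,\dots,a_r)$ is forced (otherwise one could have $\gcd(a_i)>1$, which would disconnect the Milnor fiber and change the spectrum), that every rational interior point of $F$ admits such a representation with $s\in(0,1)$ rather than hitting the boundary $s\in\{0,1\}$, and that the connected-component statement is not spoiled by a face being met by several segments $L_{\mathbf a}$ for different $\mathbf a$ — all of which reduce to the fact, already established via Proposition-Definition \ref{holomdepth}, that the depth is a well-defined constant on the interior of each face and strictly changes across faces.
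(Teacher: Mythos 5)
Your proposal is correct and takes essentially the same approach as the paper: both deduce the statement from Proposition \ref{localsystemspectrum} together with the identification of the characters $\chi_\xi$ (i.e.\ $(\xi^{a_1},\dots,\xi^{a_r})$) with the points $(a_1 s,\dots,a_r s)$ of $\mathcal U(\mathbf a)$, and use Proposition-Definition \ref{holomdepth} for constancy of depth on faces. You spell out the translation bookkeeping and the converse more explicitly than the paper does, but the underlying argument is the same.
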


\bigskip

{\bf Proof.} This proposition follows from the relation between
Betti numbers or ranks of Hodge groups of local systems 
and faces of quasiadjunction.
Recall that that the local system corresponding to the
character $\chi, \chi(\gamma_i)=\xi_i$ has non vanishing 
cohomology iff $(\xi_1,...,\xi_r)$ belongs
to the characteristic variety (cf. remark \ref{alexanderpolynomial}
below). Similarly, $(s_1,..,s_r), 0 < s_i <1$
has the dimension of Hodge group equal $k$ if $(s_1,...,s_r)$
is in the face of quasiadjunction of depth $k$. The Milnor fiber
of the singularity $\Pi f_i^{a_i}$ is homotopy equivalent 
to the infinite cyclic cover of $S^3-L$ with the Galois group 
having the characters which when viewed as the characters of
$\pi_1(S^3-L)$ have the form $(t^{a_1},...,t^{a_r})$ 
for some $t$. By proposition (\ref{localsystemspectrum})
we can  identify an element of spectrum (with multiplicity)
in $(0,1)$ with the element of the universal cover of the torus 
of characters which exponent is the local system with non vanishing
$Gr_0^FH^1(S^3-L,L_{\xi})$ (of dimension equal to multiplicity).
Hence spectrum of $\Pi f_i^{a_i}$ consists of $s$ such that 
$(a_1s \ {\rm mod} 1, ....., a_rs \ {\rm mod} 1)$ belongs to a
face of quasiadjunction. This is equivalent to the description 
of the relation between the spectrum and the faces of quasiadjunction 
given in the proposition.

\begin{remark}  Similarly, the elements of the spectrum in $(-1,0)$
can be obtained as intersections of lines $(a_1s,...,a_rs)$ with 
conjugates of translates of faces of quasiadjunction. Indeed, the 
elements of the spectrum $(-1,0)$ characterized by the property that 
$exp(2 \pi \sqrt{-1} \alpha)$ are the eigenvalues on the semi-simple 
part of the monodromy acting on 
$Gr^W_1(F^0 \cap \bar F^1)H^1(M({a_1},...,{a_r}))$. 
On the other hand $Gr^W_1(F^0 \cap \bar F^1)$ is the conjugate of 
$Gr^W_1F^1 \cap \bar F^0=Gr^W_1Gr_F^1$ (since $H^1=F^0H^1, F^2H^1=0$).  
\end{remark}

\begin{remark}\label{alexanderpolynomial}
 If $\Pi (t_1^{m_1}...t_t^{m_r}-\omega)$ is the 
multivariable Alexander polynomial of $f_1...f_r$ then the 
Alexander polynomial of $\Pi f_i^{a_i}$ is given by 
$(t-1)\Pi(t^{a_1m_1+...+a_rm_r}-\omega)$. This follows from an 
extension of the classical relation between multivariable 
and one variable Alexander polynomial (cf. \cite{Turaev}).  

\end{remark}

In particular one obtains a relation between 
the faces of quasiadjunction of $f_1 \cdot \cdot \cdot f_r$
and the spectrum of corresponding singularity as follows.
\begin{cor} A rational $\xi \in (0,1)$ belongs to the spectrum of $f_1 \cdot ...
\cdot f_r$ if and 
only if 
\begin{equation}\label{equalxi}
(\xi,...,\xi) 
\end{equation}
belongs to a face of quasiadjunction.
Moreover the multiplicity of $\xi$ in the spectrum is 
the holomorphic depth of the character $\chi_{\gamma_i}$ given by 
$\chi_{\gamma_i}=e^{2 \pi \sqrt {-1} \xi}$.
In particular the number of faces of quasiadjunction 
intersecting the line in $\mathcal U$ given by points 
(\ref{equalxi}) is equal to the number of elements in the spectrum 
of $f_1 \cdot \cdot \cdot f_r$.
\end{cor}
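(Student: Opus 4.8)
The plan is to obtain this Corollary as the specialization of the preceding Proposition to the collection ${\bf a}=(1,\dots,1)$. The first step is bookkeeping. For ${\bf a}=(1,\dots,1)$ one has $\gcd(a_i)=1$ automatically, the germ $f_1^{a_1}\cdots f_r^{a_r}$ is the reduced germ $f_1\cdots f_r$ whose Milnor fibre is connected, the dilated cube ${\mathcal U}({\bf a})$ coincides with ${\mathcal U}$ since $\Pi a_i=1$, every face $F$ of quasiadjunction has $F({\bf a})=\{F\}$ (no proper translates appear), and the segment $L_{\bf a}=(a_1t,\dots,a_rt)$ is exactly the diagonal $t\mapsto(t,\dots,t)$, i.e.\ the line through the points (\ref{equalxi}). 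Feeding these identifications into the Proposition yields at once that a rational $\xi\in(0,1)$ lies in the spectrum of $f_1\cdots f_r$ if and only if the point (\ref{equalxi}) lies on some face of quasiadjunction, and that the multiplicity of $\xi$ in the spectrum is the depth of that face.

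The second step recasts the multiplicity as the holomorphic depth of the relevant character. The character attached to the point (\ref{equalxi}) is the $\chi$ of Proposition \ref{quasiadjunctionmhs} with $\chi(\gamma_i)=e^{2\pi\sqrt{-1}\,\xi}$, which is nontrivial since $\xi\in(0,1)$, and by Proposition \ref{localsystemspectrum} the multiplicity of $\xi$ in the spectrum of this (here reduced) singularity is a depth of $\chi$. To see that only the holomorphic weight-one part $d^h(\chi)=\dim Gr^1_FGr^W_1H^1(L_\chi)$ of Proposition-Definition \ref{holomdepth} enters, I would track, through the Steenbrink--Wang sequence (\ref{wang}) and the description of the mixed Hodge structure on $H^1(S^3-L,\chi)$ provided by (\ref{oneisomorphism}) and (\ref{mayervietoris2}), which graded piece of the limit mixed Hodge structure on $H^1$ of the Milnor fibre carries the spectral numbers lying in the open interval $(0,1)$: these are exactly the eigenvalues of $T_s$ on the $Gr^1_F$-part, equivalently the classes realized by the holomorphic forms $\omega_\phi$ of (\ref{phiform}) that acquire a pole along an exceptional curve, so the weight-zero contribution $d^0(\chi)$ drops out along the diagonal and the multiplicity is $d^h(\chi)$. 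The Example of $x^r-y^r$, where the faces $H_l$ crossing the diagonal have holomorphic depth $r-l-1$ and exactly $r-2$ spectral numbers lie in $(0,1)$, serves as the sanity check for the normalizations.

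The final clause is then a counting consequence of the biconditional: distinct faces of quasiadjunction are disjoint open polytopes in ${\mathcal U}$, so each meets the diagonal in at most one point and distinct faces produce distinct points, while conversely each spectral number $\xi\in(0,1)$ yields exactly one face through the point (\ref{equalxi}); hence the faces meeting the line (\ref{equalxi}) are in bijection with the spectral numbers of $f_1\cdots f_r$ in $(0,1)$. I expect the genuine difficulty to be concentrated in the second step: fixing the normalization that relates the parametrization $x_i=\tfrac{j_i+1}{m_i}$ of ${\mathcal U}$ (hence the point (\ref{equalxi}) and its conjugate) to the Steenbrink spectral numbers, and thereby identifying the depth occurring in the Proposition with the holomorphic depth $d^h$ rather than with the full depth $d$. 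Once that normalization is pinned down, the statement is a formal consequence of the preceding Proposition together with \ref{localsystemspectrum}.
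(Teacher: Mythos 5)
Your approach coincides with the paper's: the Corollary carries no separate proof and is simply the specialization ${\bf a}=(1,\dots,1)$ of the immediately preceding Proposition, exactly as you carry out in your first step, with the last clause being the counting remark you make. The depth-versus-holomorphic-depth point you isolate in your second step is a genuine subtlety that the paper leaves implicit (the Proposition speaks of the "depth" of the intersected face and Proposition \ref{localsystemspectrum} is phrased with $Gr^0_F$ and $d(\chi_\xi)$, whereas the Corollary uses $d^h$); your $x^r-y^r$ normalization check supports the holomorphic-depth reading, so on this point you supply more justification than the text itself does.
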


The Milnor number of $f_1 \cdot \cdot \cdot f_r$ 
is related to the depths of faces of quasiadjunction 
as follow. If 
$dim Gr_0^WH^1(L_{\chi})=0$ for any $\chi$ which is the exponent 
of (\ref{equalxi}) then
\begin{equation}\label{milnornumber}
\mu=(r-1)+2\sum_F d^h(F) 
\end{equation}
where summation is over faces of quasiadjunction containing 
points of the form (\ref{equalxi})
and extra $r-1$ is due to the fact that this is the difference between the 
Betti number of 
finite branch and unbranched cover of $S^3-L$; the second part 
of (\ref{milnornumber}) calculates
the Betti number of branched covering of $S^3$. 

\bigskip

{\bf Example} The situation in which the number
 of faces of quasiadjunction 
 coincides with the number of elements in the spectrum comes up
in cases $(x^2+y^3)(y^2+x^3)$. 
For the singularity $x^r-y^r=0$ the sum of the depths of all faces is
${{(r-1)(r-2)} \over 2}$ and (**) translates to 
$r-1+2{{(r-1)(r-2)} \over 2}=(r-1)^2$

\bigskip\begin{remark} One may conjecture the following method 
to calculate faces of quasiadjunction. Let $exp: {\mathcal U} \rightarrow
Char^u H_1(S^3-Z(f_1,..f_r))$ be the universal covering map 
of the group of unitary characters restricted to the 
unit cube ${\mathcal U} \subset 
\R^r$. The subset of $\mathcal U$ which is the 
preimage of the zero set of the multivariable Alexander 
polynomial in $Char^u H_1(S^3-Z(f_1,..f_r))$ is a union of hyperplanes. 
Consider the stratification of this union in which each stratum 
consists of points belonging to the same collection of hyperplanes. 
The conjecture is that holomorphic depth is constant
on each stratum and hence the faces of quasiadjunction 
are the strata of this stratification with positive holomorphic depth.

In particular, in the case of two branches $V_1$ is a union of segments 
and $V_k$ with $k \ge 2$ are union of isolated points. The holomorphic 
depth of each point 
$(\alpha_1,\alpha_2)$ ($\alpha_i \in \Q$) 
(e.g. a component of the zero dimensional 
stratum of the preimage of the zero set of Alexander polynomial)
is the multiplicity of  $exp(2 \pi i t)$ of the singularity 
$f_1^{a_1}f_2^{a_2}=0$ 
where ${{\alpha_1} / {\alpha_2}}={{a_1} / {a_2}}$ (here 
$a_1, a_2$ are relatively prime integers and $t \in \Q$ is determined 
from $\alpha_i=a_i t$. 
\end{remark}

\section{Faces of quasiadjunction and 
Bernstein polynomials}\label{sectionbernstein}

Recall a definition of the multivariable Bernstein 
ideal following \cite{Sabbah} and \cite{Gyoja}. Consider the collection 
of polynomials $b(s_1,...,s_r) \in \C[s_1,...,s_r]$ and differential 
operators $P(s_1,..,s_r) \in {\mathcal D}[s_1,...,s_r]$ such that:

\begin{equation}
       P \cdot f_1^{s_1+1} \cdot ....\cdot f_r^{s_r+1}=
b(s_1,...s_r)f^{s_1} \cdot ...\cdot f_r^{s_r}
\end{equation}
where $f_1,...f_r$ are the germs of plane curves as in section
\ref{sectiononideals}. 
Such polynomials $b$ form an ideal $\mathcal B_{f_1,...,f_r}$ 
in $\C[s_1,...,s_r]$. This ideal is a non-zero ideal (cf. \cite{Sabbah}).
In the case $r=1$ the monic generator 
of it (uniquely defined) is called the Bernstein polynomial of germ 
$f$. The goal of this section is to show the following:

\begin{thm}\label{bernstein} Let $\mathcal P$ be the product of linear forms 
$L_i(s_1+1,...,s_r+1)$ where $L_i$ runs through linear forms  
vanishing on $(r-1)$-dimensional faces of 
of polytopes of quasiadjunction  
corresponding to a germ with 
$r$ irreducible components 
$f_1,...,f_r$. Then  
any $b \in \mathcal B_{f_1,...,f_r}$ 
is divisible by $\mathcal P$.
\end{thm}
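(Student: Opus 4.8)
The plan is to reduce the statement about the multivariable Bernstein ideal $\mathcal B_{f_1,\dots,f_r}$ to the fact, established in Section~\ref{faces}, that the codimension-one faces of quasiadjunction detect jumping phenomena for mixed multiplier ideals, and hence vanishing loci of (one-variable) Bernstein polynomials via the Ein--Lazarsfeld--Smith--Varolin result cited in Section~\ref{sectiononideals}. The key observation is that a multivariable Bernstein-type identity $P\cdot f_1^{s_1+1}\cdots f_r^{s_r+1}=b(s_1,\dots,s_r)f_1^{s_1}\cdots f_r^{s_r}$ specializes, along any affine line $s_i = a_i t + \beta_i$ with $a_i\in\Z_{>0}$, to a functional equation for $g=f_1^{a_1}\cdots f_r^{a_r}$ of the form $\tilde P\cdot g^{t+1}=\tilde b(t)\, g^{t}$ after clearing the integer shifts $\beta_i$ by multiplying $P$ by suitable powers of the $f_i$. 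Consequently the restriction of $b$ to such a line is divisible by (a shift of) the one-variable Bernstein polynomial of $g$, up to the extra factors introduced by the integer clearing, which only contribute roots at negative integers.

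\smallskip
First I would fix a linear form $L_i$ vanishing on an $(r-1)$-dimensional face $F$ of a polytope of quasiadjunction, with defining hyperplane $\sum_j m_j x_j = \ell$ in the notation of (\ref{expliciteineq}); after translating by an integer vector (which does not affect $\mathcal B_{f_1,\dots,f_r}$, since the $f_i$-adic shifts are harmless) I may take $F$ to lie in $\mathcal U$ and choose a primitive integer vector $(a_1,\dots,a_r)$ with $\gcd(a_i)=1$ along the direction $(a_1,\dots,a_r)$ so that the segment $L_{\mathbf a}=(a_1 t,\dots,a_r t)$ crosses $F$ transversally at a rational point $t=s\in(0,1)$. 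By the Proposition on spectrum and faces of quasiadjunction (the one preceding Remark~\ref{alexanderpolynomial}), $s$ then lies in the Arnold--Steenbrink spectrum of $g=f_1^{a_1}\cdots f_r^{a_r}$ in $(0,1)$, hence by the ELSV result $b_g(-s)=0$. Next I would restrict the multivariable identity to the line $s_i = a_i t$: writing $P$ as a differential operator with polynomial coefficients in $s_1,\dots,s_r$, substitute $s_i\mapsto a_i t$ to obtain $\tilde P(t)\cdot g^{t+1}=b(a_1 t,\dots,a_r t)\, g^{t}$, which by minimality of $b_g$ forces $b_g(t)\mid b(a_1 t,\dots,a_r t)$ as polynomials in $t$; evaluating at $t=s$ gives $b(a_1 s,\dots,a_r s)=0$, i.e. $b$ vanishes at the point of $F$ where $L_{\mathbf a}$ meets it.

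\smallskip
Then I would promote this pointwise vanishing to divisibility by the full linear form $L_i(s_1+1,\dots,s_r+1)$. The face $F$ being $(r-1)$-dimensional, as $(a_1,\dots,a_r)$ ranges over all admissible primitive directions transverse to $F$ the intersection points $(a_1 s,\dots,a_r s)$ sweep out a Zariski-dense subset of the hyperplane $\{\sum m_j(x_j) = \ell\}$ (this is where I use that $F$ has codimension one and that its rational points are dense in its affine span); since $b$ vanishes on a Zariski-dense subset of an affine hyperplane, the corresponding linear form divides $b$ in $\C[s_1,\dots,s_r]$. Running this over all codimension-one faces and noting that distinct faces give coprime (non-proportional) linear forms, the product $\mathcal P$ of all these forms divides $b$. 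The main obstacle I anticipate is the bookkeeping in the specialization step: controlling the extra factors of $f_i$ one must multiply $P$ by when the line does not pass through the origin (so that the exponents $s_i = a_i t + \beta_i$ with $\beta_i\in\Z$ produce genuine negative shifts), and checking that these only introduce spurious roots at negative integers (from the $f_i$-adic Bernstein identities, whose roots are negative integers) and hence cannot cancel the root at $t=s$; together with verifying the density claim for rational points on the face, which relies on the fact from Section~\ref{faces} that faces of quasiadjunction are rational polytopes.
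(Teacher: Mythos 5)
Your proposal takes a route that is genuinely different from the paper's: you try to reduce the multivariable statement to the one-variable Bernstein polynomial of $g=f_1^{a_1}\cdots f_r^{a_r}$ via the spectrum--face correspondence of Section~\ref{faces} and the Loeser--Vaqui\'e / Ein--Lazarsfeld--Smith--Varolin chain, and then sweep out the hyperplane by varying the direction $(a_1,\dots,a_r)$. The paper instead proves the theorem directly by an analytic argument: for $\phi$ in the ideal of quasiadjunction attached to the face $L=0$, the integral $\int_B |\phi|^2\prod|f_i|^{2s_i}\psi$ is a meromorphic function of $(s_1,\dots,s_r)$ acquiring a pole as one approaches the hyperplane $L(s_1+1,\dots,s_r+1)=0$ (because the corresponding $\omega_\phi$ ceases to be $L^2$), while the functional equation $|b|^2\int_B|\phi|^2\prod|f_i|^{2s_i}\psi=\int_B P\bar P\prod|f_i|^{2(s_i+1)}|\phi|^2\psi$ exhibits the same meromorphic function times $|b|^2$ as holomorphic there; hence $b$ must vanish on the whole hyperplane at once, with no pointwise-to-Zariski-density step needed.

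Unfortunately your reduction as written does not close. There are two concrete gaps. First, substituting $s_i=a_it$ in $P\cdot f_1^{s_1+1}\cdots f_r^{s_r+1}=b(s_1,\dots,s_r)f_1^{s_1}\cdots f_r^{s_r}$ produces $\tilde P\cdot f_1^{a_1t+1}\cdots f_r^{a_rt+1}=b(a_1t,\dots,a_rt)\,g^t$, and $f_1^{a_1t+1}\cdots f_r^{a_rt+1}$ is \emph{not} $g^{t+1}=f_1^{a_1t+a_1}\cdots f_r^{a_rt+a_r}$ unless all $a_i=1$. Turning this into a genuine Bernstein equation for $g$ requires raising the $i$-th exponent by $a_i$, i.e.\ iterating \emph{partial} increments $f_i^{s_i}\mapsto f_i^{s_i+1}$, but those partial relations are controlled by other $b$-functions, not by elements of $\mathcal B_{f_1,\dots,f_r}$ (which governs only the simultaneous shift $s_i\mapsto s_i+1$ in all variables). ``Multiplying $P$ by suitable powers of $f_i$'' does not produce an operator applied to $g^{t+1}$; one would have to divide, which leaves the world of differential operators. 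So the asserted divisibility $b_g(t)\mid b(a_1t,\dots,a_rt)$ is unjustified.

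Second, and independently of the first gap, the bookkeeping of signs and translations is off in a way that cannot be fixed by density. By ELSV a jumping number $s\in(0,1]$ of $g$ gives $b_g(-s)=0$, not $b_g(s)=0$, so even granting $b_g(t)\mid b(a_1t,\dots,a_rt)$ the conclusion would be $b(-a_1s,\dots,-a_rs)=0$. But writing the face hyperplane as $\sum_i m_ix_i=\ell$, the point $(a_1s,\dots,a_rs)$ lying on the face means $s\sum m_ia_i=\ell$, and then $L\bigl((-a_1s)+1,\dots,(-a_rs)+1\bigr)=\sum m_i-2\ell$, which is not zero in general; nor is $(a_1s,\dots,a_rs)$ itself on the shifted hyperplane $L(s_1+1,\dots,s_r+1)=0$. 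So the points at which your reduction would exhibit vanishing of $b$ are not on the hyperplane that the theorem asserts divides $b$. Because this mismatch is structural (it persists for every choice of $\mathbf a$), the Zariski-density promotion in your third step is vacuous: you are not accumulating zeros on the target hyperplane. To repair the argument one would need a much more careful statement, controlling the extra shift vectors $\mathbf c$ arising from iteration together with the spectrum symmetry, and it is not clear that the pieces assemble; the paper's $L^2$/meromorphic-continuation argument avoids all of this by working with the whole hyperplane from the start.
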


\begin{proof} Let $L_{\phi}$ be the equation of a face of quasiadjunction 
corresponding to a germ $\phi \in \mathcal A$ 
and let $\omega_{\phi}$ be the form 
(\ref{phiform}). Using relation (\ref{zandf}) 
we have:

\begin{equation}\label{omegaphinew}
    \omega_{\phi}=\phi \cdot 
f_1^{{{j_1+1} \over m_1}-1}....f_r^{{{j_r+1} \over m_r}-1} dx \wedge dy
\end{equation}

This form extends over the exceptional locus of the resolution 
of singularity (\ref{zandf}) if and only if $\omega_\phi$ is $L^2$-form
(cf. \cite{merle}). It follows from the definition of an ideal of quasiadjunction 
that the integral of form (\ref{omegaphinew}) (over a small compact ball 
$B$ about the origin and a test function $\psi$ on $B$):
\begin{equation}\label{integralofomegaphi}
 \int_B \vert \phi \vert^2 
\vert f_1 \vert^{2({{j_1+1} \over {m_1}}-1)} 
\cdot ...\cdot \vert f_r \vert^{{2({{j_1+1} \over {m_1}}-1)}}
 \psi
\end{equation}
converges for all test functions $\psi$ and 
 all $\phi$ in the ideal of quasiadjunction corresponding 
to a face given by $L(x_1,...x_r)=0$ provided that $L({{j_1+1} \over {m_1}},...
{{j_r+1} \over {m_r}})>0$ and if $L({{j_1+1} \over {m_1}},...
{{j_1+r} \over {m_r}})=0$ then there is $\phi$ in this ideal of quasiadjunction 
for which the 
integral  diverges. 

On the other hand we have:

\begin{equation}\label{PPbar}
 \vert b(s_1,...,s_r) \vert^2 \vert \phi \vert^2 
\vert f_1 \vert^{2s_1} \cdot ...\cdot \vert f_r \vert^{2s_r} \psi=
P\bar P  {\vert f_1 \vert^2}^{s_1+1} \cdot ....\cdot 
{{\vert f_r \vert}^2}^{s_r+1} \vert \phi \vert^2 \psi
\end{equation}
since holomorphic and anti-holomorphic differential operators 
commute (as above, $\psi$ is a test function).
Hence integrating over $B$ both sides of (\ref{PPbar}) yields

\begin{equation}\label{equationwithb}
\vert b(s_1,...,s_r) \vert^2  \int_B \vert \phi \vert^2 
\vert f_1 \vert^{2s_1} \cdot ...\cdot \vert f_r \vert^{2s_r} \psi=
\end{equation}
$$\int P\bar P {\vert f_1 \vert^{2(s_1+1)}} \cdot ....\cdot 
{{\vert f_r \vert}}^{2(s_r+1)} \vert \phi \vert^2 \psi$$

%\begin{equation}\label{equationwithb}
%\vert b(s_1,...,s_r) \vert^2  \int_B \vert \phi \vert^2 
%\vert f_1 \vert^{2s_1} \cdot ...\cdot \vert f_r \vert^{2s_r} \psi=
%\int P\bar P {\vert f_1 \vert^{2(s_1+1)} \cdot ....\cdot 
%{{\vert f_r \vert}}^{2(s_r+1)} \vert \phi \vert^2 \psi
%\end{equation}

Applying this to $\phi$ for which integral (\ref{integralofomegaphi})
diverges for $s_i={{j_i+1} \over {m_i}}-1$ where ${{j_i+1} \over {m_i}}$
is on the face of quasiadjunction we obtain that 
the meromorphic function of $(s_1,...,s_r)$ given by either side of  
(\ref{equationwithb})
when we approach to any point on the face of quasiadjunction 
right hand side is holomorphic in $s_1,..s_r$ while the integral 
in the left hand 
side has pole. Hence $b(s_1,...s_r)$ must vanish on $L(s_1+1,...,s_r+1)=0$.
\end{proof}

\begin{example} The hyperplanes containing the faces of quasiadjunction
of singularity $(x^2+y^3)(x^3+y^2)=0$ are $6x_1+4x_2=1,3,5$ and 
$4x_1+6x_2=1,3,5$ (cf. \cite{PartII} and \cite{Hodge}). Hence polynomials
in Bernstein ideal are vanishing on $6s_1+4s_2+k=0, 4s_1+6s_2+k=0, k=5,7,9$.
In fact according calculations of A.Leykin using SINGULAR 
(private communication) the Bernstein ideal for this singularity is 
principal with generator:
$$(s_1+1)(s_1+1)\Pi_{k=5,7,9,11,13} (4s_1+6s_2+k)(6s_1+4s_2+k)$$
\end{example}

\begin{remark} Similar to theorem \ref{bernstein} result, i.e. an 
identification of  
the faces of quasi-adjunction with the zeros of polynomials 
in the multivariable Bernstein ideal can be shown for 
isolated non normal crossibgs 
singularities discussed in \cite{INNC}. 
The details will appear in a forthcoming 
publication. 
\end{remark}

\end{document}